\newtheorem{theorem}{Theorem}[section]
\theoremstyle{plain}
\newtheorem{corollary}{Corollary}[section]
\newtheorem{remark}{Remark}
\numberwithin{equation}{section}
\begin{document}

 \title[New bounds for the companion of Ostrowski's inequality]{New bounds for the companion of Ostrowski's inequality and applications}
\author[W. J. Liu]{Wenjun Liu}
\address[W. J. Liu]{College of Mathematics and Statistics\\
Nanjing University of Information Science and Technology \\
Nanjing 210044, China}
\email{wjliu@nuist.edu.cn}


 \subjclass[2010]{26D15, 41A55, 41A80, 65C50}
\keywords{Ostrowski's inequality; differentiable mapping;  composite quadrature rule; probability density function}

\begin{abstract}
In this paper we establish some new bounds for the companion of Ostrowski's inequality for the case when $f'\in L^1[a,b]$, $f''\in L^2[a,b]$ and  $f'\in L^2[a,b]$, respectively. We point out that the results in the first and third cases are sharp and that some of these new estimations can be better than the known results. Some applications to composite quadrature rules, and to probability density functions are also given.
\end{abstract}

\maketitle

\section{Introduction}
An extensive literature deals with inequalities between an integral $\frac{1}{b-a}\int_{a}^{b}f(t)dt$ and its various approximations, such as trapezoidal approximation, midpoint approximation, Simpson approximation and so on.
In 1938, Ostrowski established the following interesting integral
inequality (see \cite{o1938}) for differentiable mappings with bounded
derivatives which generalizes the
estimate of an integral by the midpoint rule:
\begin{theorem}\label{Th1.1}
Let $f:[a,b]\rightarrow\mathbb{R}$ be a differentiable mapping on
$(a,b)$ whose derivative is bounded on $(a,b)$ and denote
$\|f'\|_{\infty}=\displaystyle{\sup_{t\in(a,b)}}|f'(t)|<\infty$.
Then for all $x\in[a,b]$ we have
\begin{equation}\label{1.1}
\left|f(x)-\frac{1}{b-a}\int_{a}^{b}f(t)dt\right|\leq\left[\frac{1}{4}
+\frac{(x-\frac{a+b}{2})^{2}}{(b-a)^{2}}\right](b-a)\|f'\|_{\infty}.
\end{equation}
The constant $\frac{1}{4}$ is sharp in the sense that it can not be
replaced by a smaller one.
\end{theorem}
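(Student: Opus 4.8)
The plan is to derive the estimate from the \emph{Montgomery identity}, obtained by integrating $f'$ against a Peano-type kernel. For fixed $x\in[a,b]$ define
\[
p(x,t)=\begin{cases} t-a, & a\le t\le x,\\[2pt] t-b, & x<t\le b,\end{cases}
\]
and consider $\int_a^b p(x,t)f'(t)\,dt$. Since a derivative bounded on $(a,b)$ makes $f$ Lipschitz, hence absolutely continuous, integration by parts is legitimate; splitting at $t=x$ gives $\int_a^x(t-a)f'(t)\,dt=(x-a)f(x)-\int_a^x f(t)\,dt$ and $\int_x^b(t-b)f'(t)\,dt=(b-x)f(x)-\int_x^b f(t)\,dt$. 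Adding these and dividing by $b-a$ yields
\[
f(x)-\frac{1}{b-a}\int_a^b f(t)\,dt=\frac{1}{b-a}\int_a^b p(x,t)f'(t)\,dt .
\]
Setting up and verifying this identity — in particular checking that the boundary contributions at $a$, $b$ and at the interior node $x$ combine to produce exactly $f(x)$ — is the one genuinely technical step, although it is a routine computation.

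From the identity I would pass to absolute values and use $|f'(t)|\le\|f'\|_\infty$ to obtain
\[
\left|f(x)-\frac{1}{b-a}\int_a^b f(t)\,dt\right|\le\frac{\|f'\|_\infty}{b-a}\int_a^b|p(x,t)|\,dt .
\]
Then I would evaluate $\int_a^b|p(x,t)|\,dt=\int_a^x(t-a)\,dt+\int_x^b(b-t)\,dt=\tfrac12\bigl[(x-a)^2+(b-x)^2\bigr]$, and finally rewrite this by completing the square: writing $u:=x-\tfrac{a+b}{2}$ and $h:=\tfrac{b-a}{2}$ one has $(x-a)^2+(b-x)^2=(u+h)^2+(h-u)^2=2u^2+2h^2$, so that $\frac{1}{b-a}\cdot\frac{(x-a)^2+(b-x)^2}{2}=\left[\frac14+\frac{(x-\frac{a+b}{2})^2}{(b-a)^2}\right](b-a)$, which is precisely the claimed bound \eqref{1.1}.

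For the sharpness of $\tfrac14$, I would specialize to $x=\tfrac{a+b}{2}$, where \eqref{1.1} reads $\bigl|f(\tfrac{a+b}{2})-\tfrac{1}{b-a}\int_a^b f\bigr|\le\tfrac14(b-a)\|f'\|_\infty$, and exhibit a function for which equality holds. The natural candidate is $f(t)=\bigl|t-\tfrac{a+b}{2}\bigr|$: here $f(\tfrac{a+b}{2})=0$, $\|f'\|_\infty=1$, and $\tfrac{1}{b-a}\int_a^b f(t)\,dt=\tfrac{b-a}{4}$, so both sides equal $\tfrac{b-a}{4}$. Since this $f$ is only piecewise differentiable, I would either invoke the inequality in its form valid for absolutely continuous maps, or replace $f$ by a sequence of differentiable functions converging uniformly to it with $\|f_n'\|_\infty\to1$, and conclude that no constant smaller than $\tfrac14$ can be inserted in \eqref{1.1}. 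The analytic estimate itself is entirely mechanical; the only delicate point is the bookkeeping in the integration by parts that produces the Montgomery identity.
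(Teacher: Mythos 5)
Your proposal is correct. Note, however, that the paper does not prove Theorem \ref{Th1.1} at all: it is quoted as Ostrowski's classical 1938 result (with reference \cite{o1938}) and serves only as background, so there is no in-paper argument to compare against. Your route --- the Montgomery identity obtained by integrating the Peano kernel $p(x,t)$ against $f'$, bounding by $\|f'\|_\infty\int_a^b|p(x,t)|\,dt$, and completing the square to reach \eqref{1.1} --- is the standard proof, and it is in fact the same kernel-based strategy the paper uses for its own companion results (compare the kernel $K(x,t)$ in \eqref{2.3} and the identity \eqref{2.4}, where the only difference is the symmetric three-branch kernel). Your computations check out: the integration by parts at the node $x$ produces $(b-a)f(x)$, the kernel integral is $\tfrac12[(x-a)^2+(b-x)^2]$, and the substitution $u=x-\tfrac{a+b}{2}$, $h=\tfrac{b-a}{2}$ gives exactly the stated bound. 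On sharpness, you correctly flag the one delicate point: the extremal $f(t)=|t-\tfrac{a+b}{2}|$ is not admissible under the hypothesis of differentiability on $(a,b)$, so one must either invoke the absolutely continuous version or smooth the corner while keeping $\|f_n'\|_\infty=1$; this is the same device the paper itself uses (with explicit quadratic caps) in the sharpness part of the proof of Theorem \ref{Th2.1}, so your appeal to it is legitimate and completes the argument.
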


In \cite{gs2002}, Guessab and Schmeisser proved  the following companion of
Ostrowski's inequality:
\begin{theorem}\label{Th1.2}
Let $f:[a,b]\rightarrow\mathbb{R}$ be satisfying the Lipschitz condition, i.e., $|f(t)-f(s)|\le M |t-s|$.
Then for all $x\in[a,\frac{a+b}{2}]$ we have
\begin{equation}\label{1.2}
\left|\frac{f(x)+f(a+b-x)}{2}-\frac{1}{b-a}\int_{a}^{b}f(t)dt\right|\leq\left[\frac{1}{8}+2\left(\frac{x-\frac{3a+b}{4}}{b-a}\right)^{2}\right]
(b-a) M.
\end{equation}
The constant $\frac{1}{8}$ is sharp in the sense that it can not be
replaced by a smaller one.  In \eqref{1.2}, the  point
 $x=\frac{3a+b}{4}$ gives the best estimator and yields the
trapezoid type inequality, i.e.,
\begin{align}
\left|\frac{f\left(\frac{3a+b}{4}\right)+f\left(\frac{a+3b}{4}\right)}{2}
-\frac{1}{b-a}\int_{a}^{b}f(t)dt\right|\leq\frac{b-a }{8}M.\label{1.3}
\end{align}
The constant $\frac{1}{8}$ in \eqref{1.3} is sharp in the sense mentioned above.
\end{theorem}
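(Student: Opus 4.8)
The plan is to use the classical Peano-kernel technique. Since $f$ is Lipschitz with constant $M$ it is absolutely continuous on $[a,b]$, so $f'$ exists a.e., satisfies $|f'(t)|\le M$ a.e., and the fundamental theorem of calculus and integration by parts remain valid. For $x\in[a,\frac{a+b}{2}]$ I would introduce the kernel
\[
K(x,t)=\begin{cases}
t-a, & t\in[a,x],\\
t-\frac{a+b}{2}, & t\in\left(x,a+b-x\right],\\
t-b, & t\in\left(a+b-x,b\right],
\end{cases}
\]
and establish the identity
\[
\frac{1}{b-a}\int_a^b K(x,t)\,f'(t)\,dt=\frac{f(x)+f(a+b-x)}{2}-\frac{1}{b-a}\int_a^b f(t)\,dt
\]
by splitting the integral at $x$ and at $a+b-x$ and integrating by parts on each of the three subintervals; the boundary terms collapse to $\frac{b-a}{2}\bigl(f(x)+f(a+b-x)\bigr)$ and the leftover integrals add up to $-\int_a^b f$.

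Taking absolute values in this identity and using $|f'|\le M$ gives
\[
\left|\frac{f(x)+f(a+b-x)}{2}-\frac{1}{b-a}\int_a^b f(t)\,dt\right|\le\frac{M}{b-a}\int_a^b|K(x,t)|\,dt .
\]
The remaining task is to compute $\int_a^b|K(x,t)|\,dt$. Because $x\le\frac{a+b}{2}$, the midpoint $\frac{a+b}{2}$ lies in the middle interval, so $|K|$ is a genuine ``tent'' there; the outer two pieces each contribute $\frac{(x-a)^2}{2}$ and the middle one contributes $\bigl(\frac{a+b}{2}-x\bigr)^2$, whence $\int_a^b|K(x,t)|\,dt=(x-a)^2+\bigl(\frac{a+b}{2}-x\bigr)^2$. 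Completing the square in $x-a$ rewrites this as $(b-a)^2\bigl[\frac18+2\bigl(\frac{x-(3a+b)/4}{b-a}\bigr)^2\bigr]$, which is exactly \eqref{1.2}. Since the bracket is a non-negative quadratic in $x$ with vertex at $x=\frac{3a+b}{4}\in[a,\frac{a+b}{2}]$, that point minimizes the bound and the quadratic term vanishes there, giving \eqref{1.3}.

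For the sharpness of $\frac18$ I would take $f(t)=M\int_a^t\operatorname{sgn}K(x,s)\,ds$ for the relevant $x$ (e.g.\ $x=\frac{3a+b}{4}$): this is piecewise linear with slopes $\pm M$, hence Lipschitz with constant exactly $M$, and its derivative equals $M\operatorname{sgn}K(x,t)$ a.e., so both inequalities in the estimate above become equalities. Consequently no constant smaller than $\frac18$ can stand in \eqref{1.2}, and likewise in \eqref{1.3}.

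I expect the only real obstacle to be bookkeeping rather than ideas: carrying the integration by parts through three subintervals without a sign error (a single slip contaminates the whole identity), and evaluating the three-part integral $\int_a^b|K(x,t)|\,dt$ carefully, using $x\le\frac{a+b}{2}$ at the right moment. The one genuine analytic point is justifying the integration-by-parts identity for a merely Lipschitz (rather than $C^1$) integrand, which is dispatched by absolute continuity.
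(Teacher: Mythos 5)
Your proof is correct: the kernel identity, the computation $\int_a^b|K(x,t)|\,dt=(x-a)^2+\bigl(\frac{a+b}{2}-x\bigr)^2=(b-a)^2\bigl[\frac18+2\bigl(\frac{x-(3a+b)/4}{b-a}\bigr)^2\bigr]$, the reduction of the Lipschitz hypothesis to $|f'|\le M$ a.e.\ via absolute continuity, and the extremal function $f'=M\,\mathrm{sgn}\,K(x,\cdot)$ at $x=\frac{3a+b}{4}$ (where the quadratic term vanishes, pinning down the constant $\frac18$) all check out. Note that the paper itself gives no proof of this theorem --- it is quoted from Guessab and Schmeisser \cite{gs2002}, whose original argument proceeds differently and without invoking derivatives --- but your argument is precisely the kernel machinery (the kernel \eqref{2.3} and identity \eqref{2.4}, following Dragomir \cite{d2005}) that the paper uses to prove its own results in Section 2, so it is a legitimate and essentially self-contained alternative derivation of the cited inequality together with its sharpness.
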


Motivated by \cite{gs2002}, Dragomir \cite{d2005} proved some companions of Ostrowski's
inequality, as follows:
\begin{theorem}\label{Th1.3}
Let $f:[a,b]\rightarrow\mathbb{R}$ be an absolutely continuous
mapping on $[a,b]$. Then the following inequalities
\begin{align*}
&\left|\frac{f(x)+f(a+b-x)}{2}-\frac{1}{b-a}\int_{a}^{b}f(t)dt\right|
\nonumber
\\
\leq&\left\{
{\begin{array}{l}\left[\frac{1}{8}+2\left(\frac{x-\frac{3a+b}{4}}{b-a}\right)^{2}\right]
(b-a)\|f'\|_{\infty},\quad f'\in L^{\infty}[a,b], \\
\frac{2^{1/q}}{(q+1)^{1/q}}\left[\left(\frac{x-a}{b-a}\right)^{q+1}+
\left(\frac{\frac{a+b}{2}-x}{b-a}\right)^{q+1}\right]^{1/q}(b-a)^{1/q}\|f'\|_{p},\\
\quad \quad \quad\quad\quad
p>1, \frac{1}{p}+\frac{1}{q}=1\quad \text{and}\quad f'\in L^{p}[a,b], \\
\left[\frac{1}{4}+\left|\frac{x-\frac{3a+b}{4}}{b-a}\right|\right]\|f'\|_{1},
\quad\quad\quad \quad\quad\quad f'\in L^{1}[a,b] \\
\end{array}}\right.
\end{align*}
hold for all $x\in[a,\frac{a+b}{2}]$.
\end{theorem}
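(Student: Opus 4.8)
The plan is to obtain all three estimates from one integral identity and then to bound the resulting kernel in three different norms. First I would set up, for $x\in[a,\frac{a+b}{2}]$, the Peano-type kernel
\[
p(x,t)=\begin{cases} t-a, & t\in[a,x],\\ t-\frac{a+b}{2}, & t\in(x,a+b-x),\\ t-b, & t\in[a+b-x,b],\end{cases}
\]
and prove the Montgomery-type identity
\[
\frac{f(x)+f(a+b-x)}{2}-\frac{1}{b-a}\int_{a}^{b}f(t)\,dt=\frac{1}{b-a}\int_{a}^{b}p(x,t)f'(t)\,dt .
\]
Since $f$ is absolutely continuous, $f'\in L^{1}[a,b]$ and integration by parts is legitimate on each of the three subintervals $[a,x]$, $(x,a+b-x)$, $[a+b-x,b]$. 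Carrying this out, the boundary terms telescope: the coefficient of $f(x)$ collects to $(x-a)+(\frac{a+b}{2}-x)=\frac{b-a}{2}$, likewise for $f(a+b-x)$, while the three integral pieces reassemble to $-\int_a^b f$. This establishes the identity.

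With the identity in hand the three bounds come from estimating $\bigl|\int_a^b p(x,t)f'(t)\,dt\bigr|$. For $f'\in L^{\infty}$ I would pull out $\|f'\|_{\infty}$ and compute $\int_a^b|p(x,t)|\,dt$: the two outer pieces each contribute $\frac{(x-a)^2}{2}$ and the middle piece, symmetric about $\frac{a+b}{2}$, contributes $(\frac{a+b}{2}-x)^2$, for a total of $(x-a)^2+(\frac{a+b}{2}-x)^2$. For $f'\in L^{p}$ I would apply H\"older's inequality with exponents $p,q$ and compute
\[
\int_{a}^{b}|p(x,t)|^{q}\,dt=\frac{2}{q+1}\Bigl[(x-a)^{q+1}+\Bigl(\tfrac{a+b}{2}-x\Bigr)^{q+1}\Bigr].
\]
For $f'\in L^{1}$ I would bound by $\|f'\|_{1}\sup_{t}|p(x,t)|$ and note, using $x\le\frac{a+b}{2}\le a+b-x$, that $\sup_t|p(x,t)|=\max\{x-a,\frac{a+b}{2}-x\}$.

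The final step is elementary algebra: re-expressing each quantity in terms of the displacement from the midpoint $\frac{3a+b}{4}$ of $[a,\frac{a+b}{2}]$. Writing $u=x-a$ and $v=\frac{a+b}{2}-x$, so that $u+v=\frac{b-a}{2}$ and $u-v=2\bigl(x-\frac{3a+b}{4}\bigr)$, one gets $u^{2}+v^{2}=\frac{(b-a)^{2}}{8}+2\bigl(x-\frac{3a+b}{4}\bigr)^{2}$ and $\max\{u,v\}=\frac{b-a}{4}+\bigl|x-\frac{3a+b}{4}\bigr|$; dividing through by $b-a$ (and, in the $L^p$ case, factoring $b-a$ out of the bracket and using $(b-a)^{(q+1)/q-1}=(b-a)^{1/q}$) yields precisely the three right-hand sides stated. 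I do not anticipate a genuine obstacle; the only points needing care are making the boundary terms telescope correctly in the integration by parts and keeping the three-piece case analysis of $|p(x,t)|$ and its supremum straight, for which the ordering $x\le\frac{a+b}{2}\le a+b-x$ is the key fact.
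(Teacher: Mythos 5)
Your proposal is correct and follows essentially the same route as the paper (and Dragomir's original argument it cites): the kernel you call $p(x,t)$ is exactly the kernel $K(x,t)$ of \eqref{2.3}, the Montgomery-type identity is \eqref{2.4}, and the three bounds follow from the standard $L^{\infty}$, H\"older, and sup-norm estimates of that kernel, with the same algebraic reduction via $u=x-a$, $v=\frac{a+b}{2}-x$. All computations check out, so no gaps to report.
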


Recently, Alomari \cite{a20111} studied the companion of Ostrowski inequality \eqref{1.2} for differentiable bounded mappings.

\begin{theorem}\label{Th1.4}
Let $f:[a,b]\rightarrow\mathbb{R}$ be a differentiable mapping in $(a,b)$. If $f'\in L^1[a,b]$ and $\gamma\le f'(x)\le \Gamma, \forall\ x\in [a,b]$, then for all $x\in[a,\frac{a+b}{2}]$ we have
\begin{equation}
\left|\frac{f(x)+f(a+b-x)}{2}-\frac{1}{b-a}\int_{a}^{b}f(t)dt\right|\leq\left[\frac{1}{16}+ \left(\frac{x-\frac{3a+b}{4}}{b-a}\right)^{2}\right]
(b-a) (\Gamma-\gamma).\label{1.5'}
\end{equation}
\end{theorem}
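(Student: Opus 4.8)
The plan is to start from the standard Montgomery-type identity underlying the companion of Ostrowski's inequality. For $x\in[a,\tfrac{a+b}{2}]$ one writes
\[
\frac{f(x)+f(a+b-x)}{2}-\frac{1}{b-a}\int_a^b f(t)\,dt=\frac{1}{b-a}\int_a^b p(x,t)f'(t)\,dt,
\]
where the kernel $p(x,t)$ is the piecewise linear function equal to $t-a$ on $[a,x)$, $t-\tfrac{a+b}{2}$ on $[x,a+b-x)$, and $t-b$ on $[a+b-x,b]$. I would first verify this identity by integrating by parts on each of the three subintervals, just as in the derivation of \eqref{1.2}; this is routine but needs to be displayed so the reader can check the kernel.

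The key idea for getting the factor $\Gamma-\gamma$ rather than $\|f'\|_\infty$ is the classical trick of subtracting the mean value of the derivative bounds. Since $\int_a^b p(x,t)\,dt=0$ (a short computation, or by noting the identity applied to $f(t)=t$), for any constant $C$ we have
\[
\frac{f(x)+f(a+b-x)}{2}-\frac{1}{b-a}\int_a^b f(t)\,dt=\frac{1}{b-a}\int_a^b p(x,t)\bigl(f'(t)-C\bigr)\,dt.
\]
Choosing $C=\tfrac{\gamma+\Gamma}{2}$ gives $|f'(t)-C|\le\tfrac{\Gamma-\gamma}{2}$, so the modulus of the left-hand side is bounded by $\tfrac{\Gamma-\gamma}{2(b-a)}\int_a^b|p(x,t)|\,dt$.

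It then remains to evaluate $\int_a^b|p(x,t)|\,dt$. On $[a,x]$ the kernel $t-a\ge0$ contributes $\tfrac{(x-a)^2}{2}$; on $[a+b-x,b]$ the kernel $t-b\le0$ contributes $\tfrac{(x-a)^2}{2}$ by symmetry; on the middle interval $[x,a+b-x]$ the kernel $t-\tfrac{a+b}{2}$ changes sign at the midpoint, and $\int_x^{a+b-x}|t-\tfrac{a+b}{2}|\,dt=\bigl(\tfrac{a+b}{2}-x\bigr)^2$. Summing, $\int_a^b|p(x,t)|\,dt=(x-a)^2+\bigl(\tfrac{a+b}{2}-x\bigr)^2$. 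Plugging in and dividing by $b-a$ gives the stated bound; indeed a direct expansion shows $(x-a)^2+\bigl(\tfrac{a+b}{2}-x\bigr)^2=2\bigl(x-\tfrac{3a+b}{4}\bigr)^2+\tfrac{(b-a)^2}{8}$, so $\tfrac{1}{2(b-a)}\bigl[(x-a)^2+\bigl(\tfrac{a+b}{2}-x\bigr)^2\bigr]=\bigl[\tfrac{1}{16}+\bigl(\tfrac{x-\frac{3a+b}{4}}{b-a}\bigr)^2\bigr](b-a)$, which is exactly \eqref{1.5'}.

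I do not anticipate a genuine obstacle here; the only point requiring care is the bookkeeping of signs of $p(x,t)$ across the three pieces (in particular that the middle piece is handled with the absolute value splitting at $\tfrac{a+b}{2}$, which lies inside $[x,a+b-x]$ precisely because $x\le\tfrac{a+b}{2}$), and confirming that $\int_a^b p(x,t)\,dt=0$ so that the mean-value subtraction is legitimate. Everything else is the standard Ostrowski-kernel computation.
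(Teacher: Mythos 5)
Your proof is correct: the kernel identity, the vanishing mean $\int_a^b p(x,t)\,dt=0$, the choice $C=\tfrac{\gamma+\Gamma}{2}$ giving $|f'(t)-C|\le\tfrac{\Gamma-\gamma}{2}$, and the computation $\int_a^b|p(x,t)|\,dt=(x-a)^2+\bigl(\tfrac{a+b}{2}-x\bigr)^2=2\bigl(x-\tfrac{3a+b}{4}\bigr)^2+\tfrac{(b-a)^2}{8}$ all check out and yield exactly \eqref{1.5'}. One caveat about the comparison: the paper does not prove Theorem \ref{Th1.4} at all --- it is quoted as a known result of Alomari \cite{a20111} --- so there is no in-paper proof to match against. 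Your argument does, however, run on precisely the machinery the paper sets up for its own results: your $p(x,t)$ is the kernel $K(x,t)$ of \eqref{2.3}, your identity is \eqref{2.4}, and your constant-subtraction step is \eqref{2.8} with \eqref{2.5}. The difference is in how the Grüss-type trick is closed: you take $C=\tfrac{\gamma+\Gamma}{2}$ and pair $\|f'-C\|_\infty$ with $\|K(x,\cdot)\|_{L^1}$, which produces the quadratic-in-$x$ bound with constant $\tfrac{1}{16}$; the paper's Theorem \ref{Th2.1} instead takes $C=\gamma$ or $C=\Gamma$ and pairs $\max_t|K(x,t)|$ with $\int_a^b|f'(t)-C|\,dt$, which produces the bounds in terms of $S-\gamma$ and $\Gamma-S$ with the sharp constant $\tfrac14$. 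So your route is the standard (and correct) one for \eqref{1.5'}, while the paper's dual pairing is what buys its new estimates involving the secant slope $S$.
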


For other related results, the reader may be refer to \cite{a20112, a20113, a1995, a2008, bdg2009, d20051,  d2011, d2001, hn2011, l2008, l2010, lxw2010, l2007, l2009, s20101, s2010, ss2011, thd2008, thyc2011, u2003, v2011} and the
references therein.

The main aim of this paper is to establish new estimations of the left part of \eqref{1.5'} for the case when $f'\in L^1[a,b]$, $f''\in L^2[a,b]$ and  $f'\in L^2[a,b]$, respectively. It turns out that some of these new estimations can be
better than the known results and that the results in the first and third cases are sharp.
Some applications to composite quadrature rules, and to probability density functions are also given.

\section{Main results}
\subsection{The case when $f'\in L^1[a,b]$ and is bounded}

\begin{theorem}\label{Th2.1}
Let $f:[a,b]\rightarrow\mathbb{R}$ be a differentiable mapping in $(a,b)$. If $f'\in L^1[a,b]$ and $\gamma\le f'(x)\le \Gamma, \forall\ x\in [a,b]$, then for all $x\in[a,\frac{a+b}{2}]$ we have
\begin{equation}
\left|\frac{f(x)+f(a+b-x)}{2}-\frac{1}{b-a}\int_{a}^{b}f(t)dt\right|\leq
\left[\frac{b-a}{4}+\left|x-\frac{3a+b}{4}\right|\right]  (S-\gamma)\label{2.1}
\end{equation}
and
\begin{equation}
\left|\frac{f(x)+f(a+b-x)}{2}-\frac{1}{b-a}\int_{a}^{b}f(t)dt\right|\leq
\left[\frac{b-a}{4}+\left|x-\frac{3a+b}{4}\right|\right]  (\Gamma-S),\label{2.2}
\end{equation}
where $S=(f(b)-f(a))/(b-a).$  If $\gamma, \Gamma$ are
given by
$$\gamma=\inf\limits_{t\in [a,b]}f'(t),\ \
\Gamma=\sup\limits_{t\in [a,b]}f'(t), $$
 then  the constant $\frac{1}{4}$ in \eqref{2.1}  and \eqref{2.2} is sharp in the sense that it can not be
replaced by a smaller one.
\end{theorem}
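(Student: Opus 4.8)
The plan is to start from a Montgomery-type identity adapted to the symmetric functional $\frac{f(x)+f(a+b-x)}{2}-\frac{1}{b-a}\int_a^b f(t)\,dt$. Concretely, I would introduce the kernel
\begin{equation*}
p(x,t)=\begin{cases} t-a, & t\in[a,x],\\ t-\frac{a+b}{2}, & t\in(x,a+b-x),\\ t-b, & t\in[a+b-x,b],\end{cases}
\end{equation*}
so that integration by parts yields
\begin{equation*}
\frac{f(x)+f(a+b-x)}{2}-\frac{1}{b-a}\int_a^b f(t)\,dt=\frac{1}{b-a}\int_a^b p(x,t)f'(t)\,dt.
\end{equation*}
The crucial observation is that $\int_a^b p(x,t)\,dt=0$ after a direct computation (each of the three pieces contributes, and they cancel because $x\in[a,\frac{a+b}{2}]$), so for \emph{any} constant $C$ we may replace $f'(t)$ by $f'(t)-C$ inside the integral. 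Taking $C=S=\frac{f(b)-f(a)}{b-a}$ is exactly the device that produces $(S-\gamma)$ and $(\Gamma-S)$ in place of the cruder $(\Gamma-\gamma)$.

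Next I would estimate $\bigl|\frac{1}{b-a}\int_a^b p(x,t)(f'(t)-S)\,dt\bigr|$ by pulling out $\|f'-S\|_\infty$ — which is bounded by $\max\{S-\gamma,\Gamma-S\}$, but to get the two \emph{separate} bounds \eqref{2.1} and \eqref{2.2} I would instead split the kernel into its positive and negative parts and use $\gamma-S\le f'(t)-S\le \Gamma-S$ directly on each. The remaining task is the elementary computation of $\frac{1}{b-a}\int_a^b |p(x,t)|\,dt$; by symmetry of the kernel about $\frac{a+b}{2}$ this equals $\frac{2}{b-a}\int_a^{(a+b)/2}|p(x,t)|\,dt$, which is a piecewise-quadratic integral and evaluates to $\frac{b-a}{4}+\bigl|x-\frac{3a+b}{4}\bigr|$ (one should check the sign of $t-\frac{a+b}{2}$ on $(x,\frac{a+b}{2})$ — it is negative — and that the absolute value of the middle piece is handled correctly; the case split on whether $x\lessgtr\frac{3a+b}{4}$ is what generates the absolute value in the final constant).

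For the sharpness claim, I would exhibit a function for which equality is approached. The natural candidate is a piecewise-linear $f$ whose derivative jumps between $\gamma$ and $\Gamma$ precisely at the breakpoints of $p(x,t)$, arranged so that $f'(t)-S$ has the same sign as $p(x,t)$ on each subinterval; since $f'$ is then a step function it is not continuous, so one takes a smooth (e.g. piecewise-polynomial mollified) approximation, checks that $\gamma=\inf f'$ and $\Gamma=\sup f'$ are attained in the limit, and lets the mollification parameter tend to $0$. I expect the main obstacle to be precisely this sharpness argument: one must pick the breakpoints of $f'$ to match the zeros of $p(x,t)$ and simultaneously force $S=\frac{f(b)-f(a)}{b-a}$ to take the value that makes $(S-\gamma)$ (or $(\Gamma-S)$) appear with coefficient $1$, which constrains the relative lengths of the intervals on which $f'=\gamma$ versus $f'=\Gamma$; carrying this out so that the limiting ratio of the two sides is exactly $1$, and verifying that the constant $\frac14$ cannot be lowered, is the delicate part. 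The two integral estimates themselves are routine once the zero-average property of the kernel is in hand.
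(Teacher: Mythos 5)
There is a genuine gap in your main estimate, before the sharpness question even arises. First, the explicit value you assign to the normalized $L^1$ norm of the kernel is wrong: a direct computation gives
\begin{equation*}
\frac{1}{b-a}\int_a^b |p(x,t)|\,dt=\frac{(x-a)^2+\left(\frac{a+b}{2}-x\right)^2}{b-a},
\end{equation*}
which at $x=\frac{3a+b}{4}$ equals $\frac{b-a}{8}$, not $\frac{b-a}{4}$; the expression $\frac{b-a}{4}+\left|x-\frac{3a+b}{4}\right|$ is $\max_{t\in[a,b]}|p(x,t)|$, i.e.\ the sup norm of the kernel (this is \eqref{2.10} in the paper), not its mean absolute value. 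Second, and more fundamentally, the pairing you set up (kernel in $L^1$, $f'-S$ controlled pointwise) cannot isolate the factors $S-\gamma$ and $\Gamma-S$. Writing $p=p^+-p^-$ and using $\gamma-S\le f'(t)-S\le\Gamma-S$ on each part gives, for the upper bound,
\begin{equation*}
\int_a^b p(x,t)\bigl(f'(t)-S\bigr)\,dt\le(\Gamma-S)\int_a^b p^+(x,t)\,dt+(S-\gamma)\int_a^b p^-(x,t)\,dt,
\end{equation*}
and since $\int_a^b p^+\,dt=\int_a^b p^-\,dt$ (the kernel has mean zero) the two terms merge into the single factor $(\Gamma-S)+(S-\gamma)=\Gamma-\gamma$. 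Your route therefore reproves Alomari's inequality \eqref{1.5'}, but it does not yield \eqref{2.1} or \eqref{2.2}: there is no way to retain only $S-\gamma$ (or only $\Gamma-S$) from this decomposition.

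What makes \eqref{2.1} and \eqref{2.2} work is the opposite H\"older pairing together with a different choice of the constant: take $C=\gamma$ (resp.\ $C=\Gamma$) rather than $C=S$. Then $f'-\gamma\ge 0$, so $\int_a^b|f'(t)-\gamma|\,dt=f(b)-f(a)-\gamma(b-a)=(S-\gamma)(b-a)$ exactly, and
\begin{equation*}
\left|\frac{1}{b-a}\int_a^b p(x,t)\bigl(f'(t)-\gamma\bigr)\,dt\right|\le\frac{1}{b-a}\max_{t\in[a,b]}|p(x,t)|\int_a^b\bigl(f'(t)-\gamma\bigr)\,dt=\left[\frac{b-a}{4}+\left|x-\frac{3a+b}{4}\right|\right](S-\gamma),
\end{equation*}
which is the paper's argument, and similarly with $\Gamma$ for \eqref{2.2}. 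Your sharpness sketch is calibrated to the wrong extremal configuration as well: making $f'-S$ match the sign of $p(x,t)$ on each piece is extremal for the $L^1$-kernel bound, i.e.\ for the $(\Gamma-\gamma)$ inequality, and for such $f$ the ratio of the two sides of \eqref{2.1} stays bounded away from $1$ (at $x=\frac{3a+b}{4}$ it is roughly $\frac12$), so it cannot show that $\frac14$ is optimal. For \eqref{2.1} the near-extremizers instead keep $S-\gamma$ small and concentrate the mass of $f'-\gamma$ near a point where $|p(x,\cdot)|$ attains its maximum; the paper does this with a mollified spike on $[0,1]$ ($f'$ equal to $1$ on an interval of length about $\varepsilon$ just to the left of $x$ and $0$ elsewhere, so $\gamma=0$ and $S=\varepsilon$), for which the two sides of \eqref{2.1} compare as $\varepsilon x-O(\varepsilon^2)$ against $\varepsilon\left(A+x-\frac14\right)$, forcing $A\ge\frac14$.
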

\begin{proof}
Define the kernel $K(x,t)$ by
\begin{align}
K(x,t):=\left\{ {\begin{array}{l} t-a, \quad \quad t\in[a,x], \\
t-\frac{a+b}{2}, \quad t\in(x,a+b-x], \\
t-b, \quad \quad t\in(a+b-x,b],\\
\end{array}}\right. \label{2.3}
\end{align}
for all $x\in[a,\frac{a+b}{2}]$. Integrating by parts, we obtain
(see  \cite{d2005})
\begin{align}
\frac{1}{b-a}\int_{a}^{b}K(x,t)f'(t)dt=\frac{f(x)+f(a+b-x)}{2}-\frac{1}{b-a}\int_{a}^{b}f(t)dt.
\label{2.4}
\end{align}
We also have
\begin{align}
\frac{1}{b-a}\int_{a}^{b}K(x,t) dt=0
\label{2.5}
\end{align}
and
\begin{align}
\int_{a}^{b}f'(t) dt=f(b)-f(a).
\label{2.6}
\end{align}
From \eqref{2.4}-\eqref{2.6}, it follows that
\begin{align}
&\frac{1}{b-a}\int_{a}^{b}K(x,t)f'(t)dt-\frac{1}{(b-a)^2}\int_{a}^{b}K(x,t) dt\int_{a}^{b}f'(t) dt\nonumber\\
=&\frac{f(x)+f(a+b-x)}{2}-\frac{1}{b-a}\int_{a}^{b}f(t)dt.
\label{2.7}
\end{align}

We denote
\begin{align}R_n(x)=\frac{1}{b-a}\int_{a}^{b}K(x,t)f'(t)dt-\frac{1}{(b-a)^2}\int_{a}^{b}K(x,t) dt\int_{a}^{b}f'(t) dt.
\label{2.8'}
\end{align}
If $C\in \mathbb{R}$ is an arbitrary constant, then we have
\begin{align}
R_n(x)=\frac{1}{b-a}\int_{a}^{b}(f'(t)-C)\left[K(x,t)-\frac{1}{b-a}\int_{a}^{b}K(x,s) ds\right]dt,
\label{2.8}
\end{align}
since $$\int_{a}^{b} \left[K(x,t)-\frac{1}{b-a}\int_{a}^{b}K(x,s) ds\right]dt=0.$$
Furthermore, we have
\begin{align}
|R_n(x)|\le \frac{1}{b-a} \max\limits_{t\in [a,b]}\left|K(x,t)-0\right|\int_{a}^{b}|f'(t)-C|dt.
\label{2.9}
\end{align}
and \begin{align}\max\limits_{t\in [a,b]}\left|K(x,t)\right|=\max\left\{x-a, \frac{a+b}{2}-x\right\}=\frac{b-a}{4}+\left|x-\frac{3a+b}{4}\right|,\quad x\in\left[a,\frac{a+b}{2}\right].\label{2.10}
\end{align}
We also have (see \cite{u2003})
\begin{align}\int_{a}^{b}|f'(t)-\gamma|dt=(S-\gamma)(b-a)\label{2.11}
\end{align}
and
\begin{align}\int_{a}^{b}|f'(t)-\Gamma|dt=(\Gamma-S)(b-a).\label{2.12}
\end{align}
Therefore, we obtain \eqref{2.1} and \eqref{2.2}  by using \eqref{2.7}-\eqref{2.12} and choosing $C = \gamma$  and $C = \Gamma$ in \eqref{2.9}, respectively.

For the sharpness of the constant $\frac{1}{4}$ in \eqref{2.1}, assume that  \eqref{2.1} holds with a constant $A>\frac{1}{4}$, i.e.,
\begin{equation}
\left|\frac{f(x)+f(a+b-x)}{2}-\frac{1}{b-a}\int_{a}^{b}f(t)dt\right|\leq
\left[A(b-a)+\left|x-\frac{3a+b}{4}\right|\right]  (S-\gamma).\label{2.1a}
\end{equation}
For simplicity we take $a=0, b=1, x\in \left[\frac{1}{4}, \frac{1}{2}\right]$ and choose $0<\varepsilon\ll 1$ be small. If we choose $f:[a,b]\rightarrow\mathbb{R}$ with
\begin{equation*}
f(t)=\left\{ {\begin{aligned}  & -\frac{\varepsilon^2}{2},   &&t\in[0, x-\varepsilon-\varepsilon^2], \\
& \frac{[t-(x-\varepsilon-\varepsilon^2)]^2}{2\varepsilon^2}-\frac{\varepsilon^2}{2},   &&t\in[x-\varepsilon-\varepsilon^2, x-\varepsilon],  \\
& t-(x-\varepsilon),  &&t\in(x-\varepsilon, x-\varepsilon^2],\\
& -\frac{(t-x)^2}{2\varepsilon^2}+\varepsilon-\frac{\varepsilon^2}{2},   &&t\in(x-\varepsilon^2, x],  \\
& \varepsilon-\frac{\varepsilon^2}{2},   &&t\in(x, 1],
\end{aligned}}\right.
\end{equation*}
then
\begin{equation*}
f'(t)=\left\{ {\begin{aligned}  & 0,   &&t\in[0, x-\varepsilon-\varepsilon^2], \\
& \frac{t-(x-\varepsilon-\varepsilon^2)}{\varepsilon^2},   &&t\in[x-\varepsilon-\varepsilon^2, x-\varepsilon],  \\
& 1,  &&t\in(x-\varepsilon, x-\varepsilon^2],\\
& -\frac{ t-x }{ \varepsilon^2},   &&t\in(x-\varepsilon^2, x],  \\
& 0,   &&t\in(x, 1],
\end{aligned}}\right.
\end{equation*}
which implies that $f$ is differentiable in $(a,b)$ and $\gamma=\inf\limits_{t\in [a,b]}f'(t)=0$, and
$$S=\frac{f(b)-f(a)}{b-a} =\varepsilon,\quad \frac{f(x)+f(a+b-x)}{2}=\varepsilon-\frac{\varepsilon^2}{2},\quad \frac{1}{b-a}\int_{a}^{b}f(t)dt=\frac{1}{2}\varepsilon(2+\varepsilon^2-2x),$$
giving in \eqref{2.1} $\varepsilon x-\frac{\varepsilon^2+\varepsilon^3}{2}\le \varepsilon(A+x-\frac{1}{4})$. Therefore we get $A\ge \frac{1}{4}$ when  $0<\varepsilon\ll 1$ is chosen to be small enough.

The sharpness of the constant $\frac{1}{4}$ in \eqref{2.2} can be proved similarly.
\end{proof}

\begin{corollary}
Under the assumptions of Theorem \ref{Th2.1} with $x=\frac{3a+b}{4}$, we have the
trapezoid type inequalities
\begin{align}
\left|\frac{f\left(\frac{3a+b}{4}\right)+f\left(\frac{a+3b}{4}\right)}{2}
-\frac{1}{b-a}\int_{a}^{b}f(t)dt\right|\leq \frac{b-a}{4}(S-\gamma),\label{2.13}
\end{align}
\begin{align}
\left|\frac{f\left(\frac{3a+b}{4}\right)+f\left(\frac{a+3b}{4}\right)}{2}
-\frac{1}{b-a}\int_{a}^{b}f(t)dt\right|\leq \frac{b-a}{4}(\Gamma-S).\label{2.14}
\end{align}
\end{corollary}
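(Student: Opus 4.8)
The plan is to obtain \eqref{2.13} and \eqref{2.14} as the special case $x=\frac{3a+b}{4}$ of Theorem \ref{Th2.1}. First I would check that this choice of $x$ is admissible: since $\frac{3a+b}{4}=a+\frac{b-a}{4}$ and $\frac{a+b}{2}=a+\frac{b-a}{2}$, we clearly have $\frac{3a+b}{4}\in\left[a,\frac{a+b}{2}\right]$ whenever $a<b$, so the hypotheses of Theorem \ref{Th2.1} are met for this $x$.

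Next I would record the elementary identity $a+b-x=a+b-\frac{3a+b}{4}=\frac{a+3b}{4}$, so that the evaluation point $\frac{f(x)+f(a+b-x)}{2}$ becomes $\frac{f\left(\frac{3a+b}{4}\right)+f\left(\frac{a+3b}{4}\right)}{2}$, matching the left-hand sides of \eqref{2.13} and \eqref{2.14}. Simultaneously, the distance term appearing in the bounds collapses, namely $\left|x-\frac{3a+b}{4}\right|=0$, so the bracketed factor $\left[\frac{b-a}{4}+\left|x-\frac{3a+b}{4}\right|\right]$ reduces to exactly $\frac{b-a}{4}$.

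Substituting these observations into \eqref{2.1} and \eqref{2.2} respectively yields \eqref{2.13} and \eqref{2.14} at once. There is no real obstacle here: the corollary is a direct specialization, and the only thing worth double-checking is that the point $x=\frac{3a+b}{4}$ is precisely the one that minimizes the coefficient $\frac{b-a}{4}+\left|x-\frac{3a+b}{4}\right|$ over $x\in\left[a,\frac{a+b}{2}\right]$ (it attains the minimum value $\frac{b-a}{4}$ there), which is what justifies calling these the best estimators within the family \eqref{2.1}--\eqref{2.2} and explains the "trapezoid type" terminology by analogy with \eqref{1.3}.
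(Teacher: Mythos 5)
Your proof is correct and matches the paper's approach exactly: the corollary is obtained by substituting $x=\frac{3a+b}{4}$ into \eqref{2.1} and \eqref{2.2}, noting $a+b-x=\frac{a+3b}{4}$ and that the bracketed coefficient reduces to $\frac{b-a}{4}$. Nothing further is needed.
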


\begin{corollary}
Under the assumptions of Theorem \ref{Th2.1} with $x=a$, we have the
trapezoid inequalities
\begin{align}
\left|\frac{f\left(a\right)+f\left(b\right)}{2}
-\frac{1}{b-a}\int_{a}^{b}f(t)dt\right|\leq \frac{b-a}{2}(S-\gamma),\label{2.15}
\end{align}
\begin{align}
\left|\frac{f\left(a\right)+f\left(b\right)}{2}
-\frac{1}{b-a}\int_{a}^{b}f(t)dt\right|\leq \frac{b-a}{2}(\Gamma-S).\label{2.16}
\end{align}
\end{corollary}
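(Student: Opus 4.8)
The plan is simply to specialize Theorem~\ref{Th2.1} to the endpoint $x=a$. First I would check that this choice is admissible: since $a\le\frac{a+b}{2}$, the point $x=a$ lies in $\left[a,\frac{a+b}{2}\right]$ (it is precisely the left endpoint of the allowed range), so inequalities~\eqref{2.1} and~\eqref{2.2} apply verbatim with $x$ replaced by $a$.

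Next I would evaluate the two $x$-dependent quantities occurring in Theorem~\ref{Th2.1} at $x=a$. On the left-hand side, $\frac{f(x)+f(a+b-x)}{2}$ becomes $\frac{f(a)+f(b)}{2}$, which is exactly the quantity on the left of~\eqref{2.15} and~\eqref{2.16}. For the coefficient, note that $a-\frac{3a+b}{4}=\frac{a-b}{4}$, hence $\left|a-\frac{3a+b}{4}\right|=\frac{b-a}{4}$, and therefore
\[
\frac{b-a}{4}+\left|a-\frac{3a+b}{4}\right|=\frac{b-a}{4}+\frac{b-a}{4}=\frac{b-a}{2}.
\]
Substituting these two evaluations into~\eqref{2.1} and~\eqref{2.2} yields~\eqref{2.15} and~\eqref{2.16}, respectively.

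There is essentially no obstacle here: the argument is a one-line substitution once Theorem~\ref{Th2.1} is in hand. The only point worth noting is that $x=a$ lies on the boundary of the interval on which Theorem~\ref{Th2.1} is stated, which is still covered by the hypothesis $x\in\left[a,\frac{a+b}{2}\right]$; in particular the kernel $K(a,t)$ reduces to $t-\frac{a+b}{2}$ on all of $[a,b]$, consistent with the value $\frac{b-a}{2}$ obtained for $\max_{t\in[a,b]}|K(a,t)|$ via~\eqref{2.10}.
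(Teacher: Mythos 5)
Your proposal is correct and matches the paper's intent exactly: the corollary is an immediate specialization of Theorem~\ref{Th2.1} at $x=a$, with $\frac{f(a)+f(b)}{2}$ on the left and the coefficient $\frac{b-a}{4}+\left|a-\frac{3a+b}{4}\right|=\frac{b-a}{2}$ on the right. No further argument is needed.
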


\begin{corollary}
Under the assumptions of Theorem \ref{Th2.1} with $x=\frac{a+b}{2}$, we have the
midpoint inequalities
\begin{align}
\left|f\left(\frac{a+b}{2}\right)
-\frac{1}{b-a}\int_{a}^{b}f(t)dt\right|\leq \frac{b-a}{2}(S-\gamma),\label{2.17}
\end{align}
\begin{align}
\left|f\left(\frac{a+b}{2}\right)
-\frac{1}{b-a}\int_{a}^{b}f(t)dt\right|\leq \frac{b-a}{2}(\Gamma-S).\label{2.18}
\end{align}
\end{corollary}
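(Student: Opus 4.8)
The plan is to obtain the midpoint inequalities \eqref{2.17} and \eqref{2.18} as the special case $x=\frac{a+b}{2}$ of Theorem \ref{Th2.1}. Since $\frac{a+b}{2}$ is the right endpoint of the admissible interval $\left[a,\frac{a+b}{2}\right]$, the choice $x=\frac{a+b}{2}$ is permitted, so both \eqref{2.1} and \eqref{2.2} are available at this value and it only remains to simplify the two sides.

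First I would simplify the left-hand side. With $x=\frac{a+b}{2}$ one has $a+b-x=\frac{a+b}{2}=x$, so the symmetric average collapses,
\begin{equation*}
\frac{f(x)+f(a+b-x)}{2}=\frac{f\left(\frac{a+b}{2}\right)+f\left(\frac{a+b}{2}\right)}{2}=f\left(\frac{a+b}{2}\right),
\end{equation*}
and the quantity being estimated becomes exactly the midpoint deviation $\left|f\left(\frac{a+b}{2}\right)-\frac{1}{b-a}\int_a^b f(t)\,dt\right|$.

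Next I would evaluate the coefficient on the right-hand side. A direct computation gives $x-\frac{3a+b}{4}=\frac{a+b}{2}-\frac{3a+b}{4}=\frac{b-a}{4}$, which is nonnegative since $b>a$, so $\left|x-\frac{3a+b}{4}\right|=\frac{b-a}{4}$ and the bracketed factor reduces to $\frac{b-a}{4}+\frac{b-a}{4}=\frac{b-a}{2}$. Feeding these two simplifications into \eqref{2.1} yields \eqref{2.17}, and into \eqref{2.2} yields \eqref{2.18}.

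Because the argument is merely a specialization of an inequality already established, there is no genuine obstacle here. The only point deserving a moment's care is the resolution of the absolute value $\left|x-\frac{3a+b}{4}\right|$, where the hypothesis $b>a$ must be invoked to drop the modulus and arrive at the clean coefficient $\frac{b-a}{2}$ rather than leaving the bound in absolute-value form.
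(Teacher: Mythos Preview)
Your proof is correct and is precisely the intended argument: the corollary is stated in the paper without a separate proof because it is an immediate specialization of Theorem~\ref{Th2.1} at $x=\frac{a+b}{2}$, and your computation of both the collapsed left-hand side and the coefficient $\frac{b-a}{2}$ is accurate.
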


\begin{remark} \label{re0}
We note that \eqref{2.15}-\eqref{2.16}, and \eqref{2.17}-\eqref{2.18} can also be obtained by choosing $x=a$ and $x=\frac{a+b}{2}$ in \cite[Theorem 3]{u2003}, respectively.
In fact, in \cite[Theorem 3]{u2003},  the inequalities
 \begin{align*}
\left|f(x)-\left(x-\frac{a+b}{2}\right)\frac{f(b)-f(a)}{b-a}
-\frac{1}{b-a}\int_{a}^{b}f(t)dt\right|\leq \frac{b-a}{2}(S-\gamma),\quad x\in [a,b]
\end{align*}
 \begin{align*}
\left|f(x)-\left(x-\frac{a+b}{2}\right)\frac{f(b)-f(a)}{b-a}
-\frac{1}{b-a}\int_{a}^{b}f(t)dt\right|\leq \frac{b-a}{2}(\Gamma-S),\quad x\in [a,b]
\end{align*}
 were proved.
 However, it is obvious that  \eqref{2.13} and \eqref{2.14} give  a smaller estimator than the above inequalities.
\end{remark}

A new inequality of Ostrowski's type may be stated as follows:
\begin{corollary}\label{co2.4}
Let $f$ be as in Theorem \ref{Th2.1}. Additionally, if $f$ is symmetric about the line $x=\frac{a+b}{2}$,
i.e., $f (a + b - x) = f (x)$,
 then for all $x\in[a,\frac{a+b}{2}]$ we have
\begin{equation}
\left|f(x)-\frac{1}{b-a}\int_{a}^{b}f(t)dt\right|\leq
\left[\frac{b-a}{4}+\left|x-\frac{3a+b}{4}\right|\right]  (S-\gamma),
\end{equation}\label{2.19}
\begin{equation}
\left|f(x)-\frac{1}{b-a}\int_{a}^{b}f(t)dt\right|\leq
\left[\frac{b-a}{4}+\left|x-\frac{3a+b}{4}\right|\right]  (\Gamma-S).
\end{equation}\label{2.20}
\end{corollary}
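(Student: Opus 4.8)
The plan is to derive Corollary \ref{co2.4} as an immediate specialization of Theorem \ref{Th2.1}. The key observation is that the symmetry hypothesis $f(a+b-x)=f(x)$ collapses the companion expression $\tfrac{f(x)+f(a+b-x)}{2}$ into simply $f(x)$, while leaving every other quantity appearing in \eqref{2.1} and \eqref{2.2} untouched — in particular the integral $\tfrac{1}{b-a}\int_a^b f(t)\,dt$, the endpoint difference $S=(f(b)-f(a))/(b-a)$, and the bounds $\gamma,\Gamma$ on $f'$ are all unaffected by whether or not $f$ is symmetric.

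The steps, in order, are as follows. First I would note that $f$ satisfies all the hypotheses of Theorem \ref{Th2.1}: it is differentiable on $(a,b)$, $f'\in L^1[a,b]$, and $\gamma\le f'(x)\le\Gamma$ on $[a,b]$. Hence the conclusions \eqref{2.1} and \eqml{2.2} hold for every $x\in[a,\tfrac{a+b}{2}]$. Second, I would invoke the symmetry assumption to write $\tfrac{f(x)+f(a+b-x)}{2}=\tfrac{f(x)+f(x)}{2}=f(x)$ for each such $x$, and substitute this identity into the left-hand sides of \eqref{2.1} and \eqref{2.2}. This yields exactly \eqref{2.19} and \eqref{2.20}, completing the proof.

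There is essentially no obstacle here: the argument is a one-line substitution and requires no new estimates, no new kernel computation, and no delicate analysis. If anything, the only point worth a sentence is to confirm that $a+b-x$ indeed ranges over $[\tfrac{a+b}{2},b]\subseteq[a,b]$ when $x\in[a,\tfrac{a+b}{2}]$, so that the hypothesis $f(a+b-x)=f(x)$ is being applied at legitimate points of the domain. One could also remark that, as in the corollaries preceding this one, specializing $x=\tfrac{3a+b}{4}$ recovers a symmetric trapezoid-type bound $\bigl|f(\tfrac{3a+b}{4})-\tfrac{1}{b-a}\int_a^b f\bigr|\le\tfrac{b-a}{4}(S-\gamma)$ (and its companion with $\Gamma-S$), but this is not needed for the statement itself.
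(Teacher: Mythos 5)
Your proposal is correct and is exactly the argument the paper intends: the symmetry hypothesis reduces $\tfrac{f(x)+f(a+b-x)}{2}$ to $f(x)$, and substituting this into \eqref{2.1} and \eqref{2.2} of Theorem \ref{Th2.1} gives the two stated inequalities immediately. The paper treats this as an immediate consequence and offers no separate proof, so your one-line substitution (including the harmless check that $a+b-x\in[\tfrac{a+b}{2},b]$) is precisely what is needed.
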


\begin{remark} \label{re1}
Under the assumptions of Corollary \ref{co2.4} with $x=a$, we have
\begin{align}
\left|f\left(a\right)
-\frac{1}{b-a}\int_{a}^{b}f(t)dt\right|\leq \frac{b-a}{2}(S-\gamma),\label{2.21}
\end{align}
\begin{align}
\left|f\left(a\right)
-\frac{1}{b-a}\int_{a}^{b}f(t)dt\right|\leq \frac{b-a}{2}(\Gamma-S).\label{2.22}
\end{align}
\end{remark}

\subsection{The case when  $f''\in L^2[a,b]$}

\begin{theorem}\label{Th2.2}
Let $f:[a,b]\rightarrow\mathbb{R}$ be a twice continuously differentiable
mapping  in $(a,b)$ with $f''\in L^2[a,b]$. Then for all $x\in[a,\frac{a+b}{2}]$ we have
\begin{align}
 \left|\frac{f(x)+f(a+b-x)}{2}-\frac{1}{b-a}\int_{a}^{b}f(t)dt\right| 
\leq  
\frac{(b-a)^{1/2}}{\pi}\left[\frac{(b-a)^2}{48}+\left(x-\frac{3a+b}{4}\right)^2\right]^{1/2}\|f''\|_2.\label{2.23}
\end{align}
\end{theorem}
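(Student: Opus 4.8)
The plan is to recycle the kernel $K(x,t)$ defined in \eqref{2.3} together with the identities \eqref{2.4} and \eqref{2.5}. By \eqref{2.4} the left-hand side of \eqref{2.23} equals $R_n(x)=\frac{1}{b-a}\int_a^b K(x,t)f'(t)\,dt$, and since $\int_a^b K(x,t)\,dt=0$ by \eqref{2.5} we may replace $f'(t)$ by $f'(t)-S$, where $S=(f(b)-f(a))/(b-a)=\frac{1}{b-a}\int_a^b f'(t)\,dt$. The Cauchy--Schwarz inequality then yields
\[
\left|\frac{f(x)+f(a+b-x)}{2}-\frac{1}{b-a}\int_{a}^{b}f(t)\,dt\right|
\le \frac{1}{b-a}\left(\int_a^b K(x,t)^2\,dt\right)^{1/2}\left(\int_a^b\big(f'(t)-S\big)^2\,dt\right)^{1/2}.
\]

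For the second factor I would invoke the Poincar\'{e} (Wirtinger) inequality for functions of mean zero: writing $g:=f'-S$, the map $g$ is continuously differentiable on $[a,b]$, $g'=f''$, and $\int_a^b g(t)\,dt=0$, so that $\int_a^b\big(f'(t)-S\big)^2\,dt\le \frac{(b-a)^2}{\pi^2}\int_a^b f''(t)^2\,dt=\frac{(b-a)^2}{\pi^2}\|f''\|_2^2$. (Equivalently, $\pi^2/(b-a)^2$ is the first nonzero Neumann eigenvalue of $-d^2/dt^2$ on $[a,b]$; the estimate drops out of the Fourier cosine expansion of $g$ on $[a,b]$, the mean-zero hypothesis being precisely what removes the constant Fourier mode.) This is the step that produces the factor $1/\pi$ in \eqref{2.23}, and is essentially the only ingredient beyond routine bookkeeping.

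It then remains to evaluate $\int_a^b K(x,t)^2\,dt$ from the three branches of \eqref{2.3}: the branches over $[a,x]$ and $(a+b-x,b]$ each contribute $\tfrac13(x-a)^3$, while the branch over $(x,a+b-x]$ contributes $\tfrac23\big(\tfrac{a+b}{2}-x\big)^3$, giving $\int_a^b K(x,t)^2\,dt=\tfrac23\big[(x-a)^3+(\tfrac{a+b}{2}-x)^3\big]$. Setting $p=x-a$, $q=\tfrac{a+b}{2}-x$ (so $p+q=\tfrac{b-a}{2}$ and $\tfrac{p-q}{2}=x-\tfrac{3a+b}{4}$) and using $p^3+q^3=(p+q)(p^2-pq+q^2)$, one checks the algebraic identity $\tfrac23(p^3+q^3)=(b-a)\big[\tfrac{(b-a)^2}{48}+(x-\tfrac{3a+b}{4})^2\big]$. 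Inserting the two bounds into the Cauchy--Schwarz estimate and collecting the powers of $b-a$ gives exactly \eqref{2.23}. The only potential obstacle is having the sharp constant $\pi^2/(b-a)^2$ in the mean-zero Poincar\'{e} inequality at one's disposal; the kernel identity is already established in the excerpt and the remaining integral computation is elementary.
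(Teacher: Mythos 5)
Your proof is correct, and its skeleton is the same as the paper's: the kernel identity \eqref{2.4} (equivalently \eqref{2.7} with the representation \eqref{2.8}), the Cauchy--Schwarz inequality, and the exact evaluation $\int_a^b K(x,t)^2\,dt=\left[\frac{(b-a)^2}{48}+\left(x-\frac{3a+b}{4}\right)^2\right](b-a)$, which you compute correctly via $p=x-a$, $q=\frac{a+b}{2}-x$. The one genuine difference is the lemma that produces the factor $1/\pi$: you center $f'$ at its mean $S$ and invoke the zero-mean Poincar\'e--Wirtinger inequality $\int_a^b\left(f'(t)-S\right)^2dt\le\frac{(b-a)^2}{\pi^2}\|f''\|_2^2$ (first nonzero Neumann eigenvalue), whereas the paper chooses $C=f'\left(\frac{a+b}{2}\right)$ in \eqref{2.8} and applies the Diaz--Metcalf inequality $\int_a^b\left(f'(t)-f'\left(\frac{a+b}{2}\right)\right)^2dt\le\frac{(b-a)^2}{\pi^2}\|f''\|_2^2$. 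Since both inequalities carry the same constant, the final bound \eqref{2.23} comes out identically. Your centering is in fact the more economical one: $C=S$ minimizes $\int_a^b(f'-C)^2\,dt$ over constants, so your intermediate estimate is exactly the bound of \eqref{2.31} in terms of $\sigma(f')$, and your argument amounts to deriving \eqref{2.23} from \eqref{2.31} plus Wirtinger's inequality --- which also makes explicit that \eqref{2.31} is never weaker than \eqref{2.23}. Your verification of the mean-zero hypothesis ($\int_a^b(f'-S)\,dt=0$, with $g=f'-S\in H^1$ and $g'=f''$) is what the argument needs, so there is no gap.
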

\begin{proof}
Let $R_n(x)$ be defined by \eqref{2.8'}. From \eqref{2.7}, we get
$$R_n(x)=\frac{f(x)+f(a+b-x)}{2}-\frac{1}{b-a}\int_{a}^{b}f(t)dt.$$
If we choose $C = f'((a + b)/2)$ in \eqref{2.8} and use the Cauchy inequality, then we get
\begin{align}
|R_n(x)|\le &\frac{1}{b-a}\int_{a}^{b}\left|f'(t)-f'\left(\frac{a+b}{2}\right)\right|\left|K(x,t)-\frac{1}{b-a}\int_{a}^{b}K(x,s) ds\right|dt
\nonumber\\
\le &\frac{1}{b-a}\left[\int_{a}^{b}\left(f'(t)-f'\left(\frac{a+b}{2}\right)\right)^2dt\right]^{1/2} \left[\int_{a}^{b}\left(K(x,t)-\frac{1}{b-a}\int_{a}^{b}K(x,s) ds\right)^2dt\right]^{1/2}.
\label{2.24}
\end{align}
We can use the Diaz-Metcalf inequality (see \cite[p. 83]{mpf1991} or \cite[p. 424]{u2003}) to get
$$\int_{a}^{b}\left(f'(t)-f'\left(\frac{a+b}{2}\right)\right)^2dt\le \frac{(b-a)^2}{\pi^2}\|f''\|_2^2.$$
We also have
\begin{align}&\int_{a}^{b}\left(K(x,t)-\frac{1}{b-a}\int_{a}^{b}K(x,s) ds\right)^2dt\nonumber\\
=
&\int_{a}^{b} K(x,t)^2dt=\left[\frac{(b-a)^2}{48}+\left(x-\frac{3a+b}{4}\right)^2\right](b-a).\label{2.25'}
\end{align}
Therefore, using the above relations, we obtain \eqref{2.23}.
\end{proof}

\begin{corollary}
Under the assumptions of Theorem \ref{Th2.2} with $x=\frac{3a+b}{4}$, we have the
trapezoid type inequality
\begin{align}
\left|\frac{f\left(\frac{3a+b}{4}\right)+f\left(\frac{a+3b}{4}\right)}{2}
-\frac{1}{b-a}\int_{a}^{b}f(t)dt\right|\leq \frac{(b-a)^{3/2}}{4\sqrt{3}\pi}\|f''\|_2.\label{2.26}
\end{align}
\end{corollary}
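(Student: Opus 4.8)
The plan is to specialize Theorem~\ref{Th2.2} by substituting $x=\frac{3a+b}{4}$ into the inequality \eqref{2.23} and simplifying the resulting constant. First I would observe that the point $x=\frac{3a+b}{4}$ makes the term $\left(x-\frac{3a+b}{4}\right)^2$ vanish, so the bracketed quantity on the right-hand side of \eqref{2.23} collapses to $\frac{(b-a)^2}{48}$. At the same time the left-hand side of \eqref{2.23} becomes exactly the left-hand side of \eqref{2.26}, since $a+b-x = a+b-\frac{3a+b}{4} = \frac{a+3b}{4}$.

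Next I would carry out the arithmetic on the constant. We have
\[
\frac{(b-a)^{1/2}}{\pi}\left[\frac{(b-a)^2}{48}\right]^{1/2}
= \frac{(b-a)^{1/2}}{\pi}\cdot\frac{b-a}{\sqrt{48}}
= \frac{(b-a)^{3/2}}{\pi\sqrt{48}}.
\]
Since $\sqrt{48} = \sqrt{16\cdot 3} = 4\sqrt{3}$, this equals $\frac{(b-a)^{3/2}}{4\sqrt{3}\,\pi}$, which is precisely the constant appearing in \eqref{2.26}. Combining this with the identification of the left-hand side yields the claimed inequality directly.

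There is essentially no obstacle here: the corollary is a pure specialization of the already-established Theorem~\ref{Th2.2}, and the only thing to be careful about is the elementary simplification $\sqrt{48}=4\sqrt{3}$ and the observation that $x=\frac{3a+b}{4}$ lies in the admissible interval $\left[a,\frac{a+b}{2}\right]$ (which it does, being the midpoint of $a$ and $\frac{a+b}{2}$). Thus the proof is a one-line invocation of Theorem~\ref{Th2.2} followed by the constant computation above.
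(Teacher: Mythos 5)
Your proposal is correct and is exactly the intended argument: the corollary is stated in the paper without a separate proof precisely because it is the specialization of Theorem \ref{Th2.2} at $x=\frac{3a+b}{4}$, where the term $\left(x-\frac{3a+b}{4}\right)^2$ vanishes and $\frac{(b-a)^{1/2}}{\pi}\cdot\frac{b-a}{\sqrt{48}}=\frac{(b-a)^{3/2}}{4\sqrt{3}\pi}$. Your constant computation and the check that $\frac{3a+b}{4}\in\left[a,\frac{a+b}{2}\right]$ are both accurate.
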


\begin{corollary}
Under the assumptions of Theorem \ref{Th2.2} with $x=a$, we have the
trapezoid inequality
\begin{align}
\left|\frac{f\left(a\right)+f\left(b\right)}{2}
-\frac{1}{b-a}\int_{a}^{b}f(t)dt\right|\leq \frac{(b-a)^{3/2}}{2\sqrt{3}\pi}\|f''\|_2.\label{2.27}
\end{align}
\end{corollary}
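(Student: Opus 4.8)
The plan is to obtain this corollary as an immediate specialization of Theorem~\ref{Th2.2}; no new idea is required beyond the estimate \eqref{2.23} already proved. Setting $x=a$ in \eqref{2.23}, the left-hand side becomes exactly $\left|\frac{f(a)+f(b)}{2}-\frac{1}{b-a}\int_a^b f(t)\,dt\right|$, because $a+b-x=b$ when $x=a$. So the whole task reduces to evaluating the constant on the right-hand side at $x=a$.

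For that arithmetic, note that with $x=a$ we have $x-\frac{3a+b}{4}=\frac{a-b}{4}$, hence $\left(x-\frac{3a+b}{4}\right)^2=\frac{(b-a)^2}{16}$. Substituting into the bracket appearing in \eqref{2.23} gives
\[
\frac{(b-a)^2}{48}+\frac{(b-a)^2}{16}=\frac{(b-a)^2}{48}+\frac{3(b-a)^2}{48}=\frac{(b-a)^2}{12},
\]
so that $\left[\frac{(b-a)^2}{48}+\left(x-\frac{3a+b}{4}\right)^2\right]^{1/2}=\frac{b-a}{2\sqrt3}$. Multiplying by the prefactor $\frac{(b-a)^{1/2}}{\pi}$ from \eqref{2.23} yields the constant $\frac{(b-a)^{3/2}}{2\sqrt3\,\pi}$, which is precisely what appears in \eqref{2.27}.

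There is essentially no obstacle: the entire substance lies in Theorem~\ref{Th2.2} (the representation \eqref{2.7}--\eqref{2.8}, the Cauchy--Schwarz inequality, the Diaz--Metcalf estimate, and the identity \eqref{2.25'} for $\int_a^b K(x,t)^2\,dt$), and the corollary is just the endpoint case $x=a$ together with a one-line simplification. As a sanity check one may observe that $\frac{(b-a)^2}{48}+\left(x-\frac{3a+b}{4}\right)^2$ is maximized over $x\in[a,\frac{a+b}{2}]$ at the endpoints $x=a$ and $x=\frac{a+b}{2}$, so \eqref{2.27} is the weakest member of this family, while the sharper trapezoid-type bound \eqref{2.26} is obtained at the optimal point $x=\frac{3a+b}{4}$.
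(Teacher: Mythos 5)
Your proof is correct and is exactly the intended argument: the paper treats \eqref{2.27} as an immediate consequence of Theorem \ref{Th2.2} by setting $x=a$ in \eqref{2.23}, and your computation $\frac{(b-a)^2}{48}+\frac{(b-a)^2}{16}=\frac{(b-a)^2}{12}$, hence the constant $\frac{(b-a)^{3/2}}{2\sqrt{3}\,\pi}$, is precisely the required simplification.
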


\begin{corollary}
Under the assumptions of Theorem \ref{Th2.2} with $x=\frac{a+b}{2}$, we have the
midpoint inequalities
\begin{align}
\left|f\left(\frac{a+b}{2}\right)
-\frac{1}{b-a}\int_{a}^{b}f(t)dt\right|\leq \frac{(b-a)^{3/2}}{2\sqrt{3}\pi}\|f''\|_2.\label{2.28}
\end{align}
\end{corollary}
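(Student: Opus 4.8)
The plan is to obtain this midpoint inequality as an immediate specialization of Theorem~\ref{Th2.2}, requiring no new analytic input. First I would set $x=\frac{a+b}{2}$ in \eqref{2.23}. For this choice $a+b-x=\frac{a+b}{2}=x$, so the left-hand side collapses to $\frac{f(x)+f(a+b-x)}{2}=f\bigl(\frac{a+b}{2}\bigr)$, which is precisely the quantity on the left of the asserted inequality.

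Next I would simplify the constant on the right-hand side. A one-line computation gives $x-\frac{3a+b}{4}=\frac{a+b}{2}-\frac{3a+b}{4}=\frac{b-a}{4}$, hence $\bigl(x-\frac{3a+b}{4}\bigr)^2=\frac{(b-a)^2}{16}$. Inserting this into the bracketed term of \eqref{2.23} yields
\[
\frac{(b-a)^2}{48}+\frac{(b-a)^2}{16}=\frac{(b-a)^2+3(b-a)^2}{48}=\frac{(b-a)^2}{12},
\]
whose square root is $\frac{b-a}{2\sqrt{3}}$. Multiplying by the prefactor $\frac{(b-a)^{1/2}}{\pi}\|f''\|_2$ from \eqref{2.23} produces $\frac{(b-a)^{1/2}}{\pi}\cdot\frac{b-a}{2\sqrt{3}}\cdot\|f''\|_2=\frac{(b-a)^{3/2}}{2\sqrt{3}\pi}\|f''\|_2$, which is exactly the claimed right-hand side.

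Since the argument is a pure substitution into Theorem~\ref{Th2.2}, there is no genuine obstacle; the only points to watch are the arithmetic reduction $\frac{1}{48}+\frac{1}{16}=\frac{1}{12}$ and the bookkeeping of powers $(b-a)^{1/2}\cdot(b-a)=(b-a)^{3/2}$. The very same scheme, applied with $x=\frac{3a+b}{4}$ and with $x=a$, reproduces the trapezoid-type corollaries leading to \eqref{2.26} and \eqref{2.27}, so no appeal beyond Theorem~\ref{Th2.2} itself is needed.
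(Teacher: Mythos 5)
Your proposal is correct and is exactly the intended argument: the paper states this corollary as a direct specialization of Theorem \ref{Th2.2} at $x=\frac{a+b}{2}$, and your substitution, the identity $x-\frac{3a+b}{4}=\frac{b-a}{4}$, and the reduction $\frac{1}{48}+\frac{1}{16}=\frac{1}{12}$ giving the constant $\frac{(b-a)^{3/2}}{2\sqrt{3}\pi}$ are all as in the paper.
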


\begin{remark} \label{re2}
We note that \eqref{2.27} and \eqref{2.28} can also be obtained by choosing $x=a$ and $x=\frac{a+b}{2}$ in \cite[Theorem 4]{u2003}, respectively.
In fact, in \cite[Theorem 4]{u2003},  the inequality
 \begin{align*}
\left|f(x)-\left(x-\frac{a+b}{2}\right)\frac{f(b)-f(a)}{b-a}
-\frac{1}{b-a}\int_{a}^{b}f(t)dt\right|\leq \frac{(b-a)^{3/2}}{2\sqrt{3}\pi}\|f''\|_2,\quad x\in [a,b]
\end{align*}
 was proved.
 However, it is obvious that  \eqref{2.26} gives a smaller estimator than the above inequality.
\end{remark}

The other new inequality of Ostrowski's type may be stated as follows:
\begin{corollary}\label{co2.8}
Let $f$ be as in Theorem \ref{Th2.2}. Additionally, if $f$ is symmetric about the line $x=\frac{a+b}{2}$,
i.e., $f (a + b - x) = f (x)$,
 then for all $x\in[a,\frac{a+b}{2}]$ we have
\begin{equation}
\left|f(x)-\frac{1}{b-a}\int_{a}^{b}f(t)dt\right|\leq
\frac{(b-a)^{1/2}}{\pi}\left[\frac{(b-a)^2}{48}+\left(x-\frac{3a+b}{4}\right)^2\right]^{1/2}\|f''\|_2.\label{2.29}
\end{equation}
\end{corollary}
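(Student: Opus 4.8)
The plan is to derive Corollary~\ref{co2.8} as an immediate specialization of Theorem~\ref{Th2.2}, exactly in the spirit of Corollary~\ref{co2.4} relative to Theorem~\ref{Th2.1}. The only work is to account for the symmetry hypothesis $f(a+b-x)=f(x)$, which collapses the two-point average on the left-hand side of \eqref{2.23} to a single value.

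First I would observe that under the stated symmetry, for every $x\in[a,\frac{a+b}{2}]$ we have
\begin{equation*}
\frac{f(x)+f(a+b-x)}{2}=\frac{f(x)+f(x)}{2}=f(x).
\end{equation*}
Then I would simply invoke Theorem~\ref{Th2.2}, whose hypotheses (namely, $f$ twice continuously differentiable on $(a,b)$ with $f''\in L^2[a,b]$) are included among the assumptions of the corollary. Substituting the above identity into the left-hand side of \eqref{2.23} yields
\begin{equation*}
\left|f(x)-\frac{1}{b-a}\int_{a}^{b}f(t)\,dt\right|\leq
\frac{(b-a)^{1/2}}{\pi}\left[\frac{(b-a)^2}{48}+\left(x-\frac{3a+b}{4}\right)^2\right]^{1/2}\|f''\|_2,
\end{equation*}
which is precisely \eqref{2.29}. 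No further estimation, kernel manipulation, or application of the Diaz--Metcalf or Cauchy--Schwarz inequalities is needed beyond what Theorem~\ref{Th2.2} already supplies.

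There is essentially no obstacle here: the proof is a one-line substitution. The only point worth a moment's care is that the symmetry condition $f(a+b-x)=f(x)$ is meant to hold for all $x$ (so that $f$ is genuinely symmetric about $\frac{a+b}{2}$ and thus $f''\in L^2[a,b]$ is compatible with the symmetry), rather than at a single point; under that reading the identity above is valid for every $x$ in the range, and the conclusion follows verbatim from Theorem~\ref{Th2.2}. One could also remark, as the paper does after Corollary~\ref{co2.4}, that this furnishes a new Ostrowski-type bound in which the midpoint-type term $x-\frac{3a+b}{4}$ appears, but that is commentary rather than part of the proof.
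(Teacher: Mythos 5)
Your proof is correct and matches the paper's (implicit) argument: the symmetry $f(a+b-x)=f(x)$ collapses the average $\frac{f(x)+f(a+b-x)}{2}$ to $f(x)$, and then inequality \eqref{2.23} of Theorem \ref{Th2.2} gives \eqref{2.29} directly. No further work is needed, and your reading of the symmetry hypothesis as holding for all $x$ is the intended one.
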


\begin{remark} \label{re3}
Under the assumptions of Corollary \ref{co2.8} with $x=a$, we have
\begin{align}
\left|f\left(a\right)
-\frac{1}{b-a}\int_{a}^{b}f(t)dt\right|  \leq \frac{(b-a)^{3/2}}{2\sqrt{3}\pi}\|f''\|_2.\label{2.30}
\end{align}
\end{remark}

\subsection{The case when $f'\in L^2[a,b]$}

\begin{theorem}\label{Th2.3}
Let $f:[a,b]\rightarrow\mathbb{R}$ be an absolutely continuous mapping in $(a,b)$ with $f'\in L^2[a,b]$. Then for all $x\in[a,\frac{a+b}{2}]$ we have
\begin{align}
&\left|\frac{f(x)+f(a+b-x)}{2}-\frac{1}{b-a}\int_{a}^{b}f(t)dt\right|\nonumber \\
\leq &
(b-a)^{-1/2} \left[\frac{(b-a)^2}{48}+\left(x-\frac{3a+b}{4}\right)^2\right]^{1/2}\sqrt{\sigma(f')},\label{2.31}
\end{align}
where $\sigma(f')$ is defined by
$$\sigma(f')=\|f'\|_2^2-\frac{(f(b)-f(a))^2}{b-a}=\|f'\|_2^2-S^2(b-a)$$
and $S$ is defined in Theorem \ref{Th2.1}. Inequality \eqref{2.31} is sharp in the sense that the constant $\frac{1}{48}$ of the right-hand side cannot be replaced by a
smaller one.
\end{theorem}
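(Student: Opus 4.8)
The plan is to reuse the kernel representation from the proof of Theorem~\ref{Th2.1}, namely the identity $R_n(x) = \frac{1}{b-a}\int_a^b (f'(t)-C)\bigl[K(x,t) - \frac{1}{b-a}\int_a^b K(x,s)\,ds\bigr]\,dt$, and to apply the Cauchy--Schwarz inequality with the \emph{optimal} choice of the constant $C$, which is the mean value $C = S = \frac{1}{b-a}\int_a^b f'(t)\,dt$. First I would note that with this choice the $L^2$-norm of $f' - C$ on $[a,b]$ equals exactly $\bigl(\|f'\|_2^2 - S^2(b-a)\bigr)^{1/2} = \sqrt{\sigma(f')}$, since subtracting the mean minimizes the $L^2$ deviation and the Pythagorean identity $\int_a^b (f'-S)^2 = \int_a^b f'^2 - S^2(b-a)$ holds. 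Then applying Cauchy--Schwarz gives
\begin{align*}
|R_n(x)| \le \frac{1}{b-a}\,\sqrt{\sigma(f')}\,\left[\int_a^b \Bigl(K(x,t) - \tfrac{1}{b-a}\int_a^b K(x,s)\,ds\Bigr)^2 dt\right]^{1/2}.
\end{align*}

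Next I would invoke the computation already carried out in \eqref{2.25'}: since $\int_a^b K(x,s)\,ds = 0$, the bracketed integral reduces to $\int_a^b K(x,t)^2\,dt = \bigl[\frac{(b-a)^2}{48} + (x - \frac{3a+b}{4})^2\bigr](b-a)$. Substituting this into the bound and simplifying the powers of $(b-a)$ yields exactly \eqref{2.31}. That disposes of the inequality itself with essentially no new calculation beyond what Theorem~\ref{Th2.2} already used.

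The substantive part is the sharpness claim for the constant $\frac{1}{48}$. Here the plan is the standard device: assume \eqref{2.31} holds with $\frac{1}{48}$ replaced by some $c < \frac{1}{48}$, normalize to $a=0$, $b=1$, and exhibit a test function for which the inequality fails. A natural candidate is to work at the balanced point $x = \frac{3a+b}{4}$ (where the bracket is smallest, so the constant $\frac{1}{48}$ is doing all the work) and to choose $f'$ so that equality is attained in the Cauchy--Schwarz step, i.e. $f'(t) - S$ proportional to $K(x,t) - \frac{1}{b-a}\int K(x,s)\,ds = K(x,t)$; concretely take $f'(t) = K\bigl(\frac{3a+b}{4}, t\bigr)$ (a piecewise linear function), which is genuinely in $L^2$ and absolutely continuous as required, then compute $S$, $\|f'\|_2^2$, $\sigma(f')$, and the left-hand side $\frac{f(x)+f(a+b-x)}{2} - \frac{1}{b-a}\int f$ explicitly and check that the resulting inequality forces $c \ge \frac{1}{48}$. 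If the exact equality case is slightly awkward (for instance because at $x=\frac{3a+b}{4}$ the kernel is only piecewise linear and symmetry of $f$ needs checking), one can instead smooth it or use a one-parameter family $f'_\varepsilon$ approaching the extremal kernel and let $\varepsilon \to 0$, exactly as in the sharpness argument for Theorem~\ref{Th2.1}.

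I expect the main obstacle to be purely bookkeeping in the sharpness part: one must verify that the chosen $f$ satisfies all the hypotheses of the theorem (absolute continuity, $f' \in L^2$), compute several integrals of a piecewise-linear integrand without error, and confirm that the inequality degenerates to $c \ge \frac{1}{48}$ rather than to something weaker. The inequality \eqref{2.31} itself presents no real difficulty, since it is a one-line consequence of \eqref{2.8}, the variance identity for $C=S$, and the already-established kernel computation \eqref{2.25'}.
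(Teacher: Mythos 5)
For the inequality itself you follow exactly the paper's route: choose $C=S=\frac{1}{b-a}\int_a^b f'(s)\,ds$ in \eqref{2.8}, apply Cauchy--Schwarz, note $\int_a^b(f'-S)^2\,dt=\sigma(f')$, and plug in the kernel computation \eqref{2.25'}; there is nothing to compare there. The genuine difference is the sharpness claim. The paper does not construct anything: it simply observes that at $x=a$ or $x=\frac{a+b}{2}$ the bound reduces to the trapezoid and midpoint inequalities \eqref{2.35}, \eqref{2.36}, whose sharpness is cited from Z.~Liu and Duoandikoetxea; since at those points the bracket is $c(b-a)^2+\frac{(b-a)^2}{16}$, a smaller $c$ would contradict the known optimal constant $\frac{1}{12}$, forcing $c\ge\frac{1}{48}$. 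You instead exhibit an extremal function directly at $x=\frac{3a+b}{4}$, and your outline does go through: taking $f(t)=\int_a^t K\bigl(\frac{3a+b}{4},s\bigr)\,ds$, the function $f$ is absolutely continuous with $f'=K\bigl(\frac{3a+b}{4},\cdot\bigr)\in L^2$, $S=0$ by \eqref{2.5}, $\sigma(f')=\int_a^b K^2\,dt=\frac{(b-a)^3}{48}$, and by \eqref{2.4} the left-hand side equals $\frac{1}{b-a}\int_a^b K^2\,dt=\frac{(b-a)^2}{48}$, which coincides exactly with the right-hand side of \eqref{2.31}; hence no constant smaller than $\frac{1}{48}$ can work, and the smoothing fallback you hedge with is unnecessary. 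Your argument buys a self-contained proof with an exact equality case (indeed it shows \eqref{2.34} is attained), while the paper's argument is shorter but leans on external results and on the remark that sharpness of the special cases already pins down the additive constant $\frac{1}{48}$.
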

\begin{proof}
Let $R_n(x)$ be defined by \eqref{2.8'}. From \eqref{2.7}, we get
$$R_n(x)=\frac{f(x)+f(a+b-x)}{2}-\frac{1}{b-a}\int_{a}^{b}f(t)dt.$$
If we choose $C = \frac{1}{b-a}\int_{a}^{b}f'(s)ds$ in \eqref{2.8} and use the Cauchy inequality and \eqref{2.25'}, then we get
\begin{align}
&|R_n(x)|\nonumber\\
\le &\frac{1}{b-a}\int_{a}^{b}\left|f'(t)-\frac{1}{b-a}\int_{a}^{b}f'(s)ds\right|\left|K(x,t)-\frac{1}{b-a}\int_{a}^{b}K(x,s) ds\right|dt
\nonumber\\
\le &\frac{1}{b-a}\left[\int_{a}^{b}\left(f'(t)-\frac{1}{b-a}\int_{a}^{b}f'(s)ds \right)^2dt\right]^{1/2} \left[\int_{a}^{b}\left(K(x,t)-\frac{1}{b-a}\int_{a}^{b}K(x,s) ds\right)^2dt\right]^{1/2}\nonumber \\
\leq &\sqrt{\sigma(f')}
\left[\frac{(b-a)^2}{48}+\left(x-\frac{3a+b}{4}\right)^2\right]^{1/2}(b-a)^{-1/2}.
\label{2.33}
\end{align}

The sharpness of the constant $\frac{1}{48}$ in \eqref{2.31} can be obtained in a particular case for $x=a$ or $x=\frac{a+b}{2}$ which has been
proved in \cite[Theorem 3 and Remark 2]{l2007} and  \cite[Propositions 2.3 and 2.6]{d2001}.
\end{proof}

\begin{corollary}
Under the assumptions of Theorem \ref{Th2.2} with $x=\frac{3a+b}{4}$, we have the
trapezoid type inequality
\begin{align}
\left|\frac{f\left(\frac{3a+b}{4}\right)+f\left(\frac{a+3b}{4}\right)}{2}
-\frac{1}{b-a}\int_{a}^{b}f(t)dt\right|\leq \frac{(b-a)^{1/2}}{4\sqrt{3} }\sqrt{\sigma(f')}.\label{2.34}
\end{align}
\end{corollary}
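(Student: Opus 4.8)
The plan is to obtain this corollary as an immediate specialization of Theorem~\ref{Th2.3} to the point $x=\frac{3a+b}{4}$, exactly as the trapezoid-type corollaries \eqref{2.13}, \eqref{2.14} and \eqref{2.26} were deduced from their respective theorems. First I would check that this choice of $x$ is admissible, i.e. that $\frac{3a+b}{4}\in[a,\frac{a+b}{2}]$: in fact $\frac{3a+b}{4}$ is precisely the midpoint of the subinterval $[a,\frac{a+b}{2}]$, so it lies strictly inside it whenever $a<b$, and all hypotheses of Theorem~\ref{Th2.3} (absolute continuity on $(a,b)$, $f'\in L^2[a,b]$) are inherited verbatim.

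Next I would substitute $x=\frac{3a+b}{4}$ into \eqref{2.31}. On the left-hand side one has $a+b-x=\frac{a+3b}{4}$, so the pair of evaluation points becomes $\left\{\frac{3a+b}{4},\frac{a+3b}{4}\right\}$ and the left side of \eqref{2.31} turns into the left side of \eqref{2.34}. On the right-hand side the term $\left(x-\frac{3a+b}{4}\right)^2$ vanishes, so the bracket collapses to $\frac{(b-a)^2}{48}$.

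Finally I would simplify the constant: $(b-a)^{-1/2}\left[\frac{(b-a)^2}{48}\right]^{1/2}=\frac{(b-a)^{1/2}}{\sqrt{48}}=\frac{(b-a)^{1/2}}{4\sqrt{3}}$, which, multiplied by $\sqrt{\sigma(f')}$, is exactly the right-hand side of \eqref{2.34}. There is essentially no obstacle here: no new estimate is required, since all the analytic content (the kernel identity \eqref{2.25'}, the Cauchy--Schwarz step producing $\sqrt{\sigma(f')}$, and the sharpness discussion) has already been established in the proof of Theorem~\ref{Th2.3}. The only points demanding a little care are the elementary identity $\sqrt{48}=4\sqrt{3}$ and the bookkeeping of which quarter-point corresponds to $x$ and which to $a+b-x$.
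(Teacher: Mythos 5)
Your proposal is correct and is exactly the paper's (implicit) argument: specialize \eqref{2.31} at $x=\frac{3a+b}{4}$, note the vanishing of $\left(x-\frac{3a+b}{4}\right)^2$, and simplify $(b-a)^{-1/2}\left[\frac{(b-a)^2}{48}\right]^{1/2}=\frac{(b-a)^{1/2}}{4\sqrt{3}}$. You also correctly read the hypotheses as those of Theorem \ref{Th2.3} (the corollary's reference to Theorem \ref{Th2.2} is evidently a slip in the paper), so nothing further is needed.
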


\begin{corollary}
Under the assumptions of Theorem \ref{Th2.2} with $x=a$, we have the sharp
trapezoid inequality
\begin{align}
\left|\frac{f\left(a\right)+f\left(b\right)}{2}
-\frac{1}{b-a}\int_{a}^{b}f(t)dt\right|\leq \frac{(b-a)^{1/2}}{2\sqrt{3}}\sqrt{\sigma(f')}.\label{2.35}
\end{align}
\end{corollary}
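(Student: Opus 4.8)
The plan is to specialize the general machinery of Theorem~\ref{Th2.3} to the choice $x=a$, where the kernel $K(a,t)$ collapses to the familiar "trapezoid" kernel. First I would observe that when $x=a$ the three-piece kernel $K(x,t)$ in \eqref{2.3} degenerates: the first and third branches disappear, and we are left with $K(a,t)=t-\frac{a+b}{2}$ on all of $[a,b]$. Consequently $\frac{1}{b-a}\int_a^b K(a,s)\,ds=0$ automatically, and identity \eqref{2.7}--\eqref{2.8'} gives
\[
R_n(a)=\frac{f(a)+f(b)}{2}-\frac{1}{b-a}\int_a^b f(t)\,dt .
\]
Then I would simply evaluate the bracket on the right-hand side of \eqref{2.31} at $x=a$: since $a-\frac{3a+b}{4}=-\frac{b-a}{4}$, we get $\left(x-\frac{3a+b}{4}\right)^2=\frac{(b-a)^2}{16}$, so the bracket becomes $\frac{(b-a)^2}{48}+\frac{(b-a)^2}{16}=\frac{(b-a)^2}{48}+\frac{3(b-a)^2}{48}=\frac{4(b-a)^2}{48}=\frac{(b-a)^2}{12}$. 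Taking the square root yields $\frac{(b-a)}{2\sqrt{3}}$, and multiplying by the prefactor $(b-a)^{-1/2}$ produces exactly $\frac{(b-a)^{1/2}}{2\sqrt{3}}\sqrt{\sigma(f')}$, which is \eqref{2.35}.

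For the sharpness claim I would invoke the last sentence of the proof of Theorem~\ref{Th2.3}: the constant $\frac{1}{48}$ there is asserted to be sharp precisely via the case $x=a$ (or $x=\frac{a+b}{2}$), and this is attributed to \cite[Theorem 3 and Remark 2]{l2007} and \cite[Propositions 2.3 and 2.6]{d2001}. So the sharpness of the constant in \eqref{2.35} is not something new to prove here — it is exactly the special case that certifies the sharpness of the parent inequality. In other words, one would point to the extremal function (or sequence of functions) used in those references realizing equality asymptotically in the trapezoid estimate with constant $\frac{1}{2\sqrt{3}}$.

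The only genuine content is the arithmetic simplification $\frac{1}{48}+\frac{1}{16}=\frac{1}{12}$ and $\sqrt{1/12}=\frac{1}{2\sqrt 3}$, together with the observation that the integral identity \eqref{2.7} remains valid at the boundary point $x=a$ (which it does, since all the kernel computations and the integration-by-parts step in \cite{d2005} hold for every $x\in[a,\frac{a+b}{2}]$, endpoints included). There is no real obstacle; the corollary is an immediate evaluation of Theorem~\ref{Th2.3} at $x=a$, and the one point worth stating carefully is that "sharp" here is inherited from — indeed is the very instance used to prove — the sharpness already established in the parent theorem.
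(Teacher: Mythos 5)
Your proposal is correct and coincides with what the paper does implicitly: the corollary is just the evaluation of \eqref{2.31} at $x=a$, where the bracket becomes $\frac{(b-a)^2}{48}+\frac{(b-a)^2}{16}=\frac{(b-a)^2}{12}$, and the sharpness is exactly the case established in \cite{l2007} and \cite{d2001} that the paper itself cites. Your side remark that the kernel identity remains valid at the endpoint, and your reading of the sharpness as inherited from (indeed certifying) the parent theorem, are both accurate; note only that the corollary's reference to Theorem \ref{Th2.2} is a typo for Theorem \ref{Th2.3}, as you correctly assumed.
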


\begin{corollary}
Under the assumptions of Theorem \ref{Th2.2} with $x=\frac{a+b}{2}$, we have the sharp
midpoint inequalities
\begin{align}
\left|f\left(\frac{a+b}{2}\right)
-\frac{1}{b-a}\int_{a}^{b}f(t)dt\right|\leq \frac{(b-a)^{1/2}}{2\sqrt{3}}\sqrt{\sigma(f')}.\label{2.36}
\end{align}
\end{corollary}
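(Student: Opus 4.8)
The plan is to obtain \eqref{2.36} as the direct specialization of the master inequality \eqref{2.31} to the abscissa $x=\frac{a+b}{2}$, and then to record its sharpness by appealing to the known midpoint estimates cited in the proof of Theorem \ref{Th2.3}. First I would substitute $x=\frac{a+b}{2}$ into the left-hand side of \eqref{2.31}. Since $a+b-x=a+b-\frac{a+b}{2}=\frac{a+b}{2}=x$, the two boundary evaluations coincide and $\frac{f(x)+f(a+b-x)}{2}$ collapses to $f\!\left(\frac{a+b}{2}\right)$, so the left-hand side becomes exactly the midpoint deviation appearing in \eqref{2.36}.

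Next I would simplify the bracketed factor on the right of \eqref{2.31}. The only quantity depending on $x$ there is $x-\frac{3a+b}{4}$, and at $x=\frac{a+b}{2}$ one computes $x-\frac{3a+b}{4}=\frac{b-a}{4}$, whence $\left(x-\frac{3a+b}{4}\right)^2=\frac{(b-a)^2}{16}$. Adding the fixed term gives $\frac{(b-a)^2}{48}+\frac{(b-a)^2}{16}=\frac{(b-a)^2}{12}$, so the square bracket raised to the power $1/2$ equals $\frac{b-a}{2\sqrt{3}}$. Multiplying by the prefactor $(b-a)^{-1/2}$ leaves $\frac{(b-a)^{1/2}}{2\sqrt{3}}$, and carrying along $\sqrt{\sigma(f')}$ yields precisely the right-hand side of \eqref{2.36}. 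This establishes the inequality; the computation is purely arithmetic and presents no obstacle.

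The substantive point is the claim that the resulting constant is sharp. Here I would not reconstruct a new extremal family, but instead invoke the fact, already recorded in the proof of Theorem \ref{Th2.3}, that the constant $\frac{1}{48}$ in \eqref{2.31} is attained in the particular case $x=\frac{a+b}{2}$. Concretely, the sharp midpoint inequality with constant $\frac{1}{2\sqrt{3}}$ is exactly the estimate established in \cite[Theorem 3 and Remark 2]{l2007} and \cite[Propositions 2.3 and 2.6]{d2001}; since \eqref{2.36} coincides with that estimate, the constant $\frac{1}{2\sqrt{3}}$ (equivalently, the constant $\frac{1}{48}$ inside the bracket of \eqref{2.31}) cannot be replaced by a smaller one. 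The only thing requiring care is to check that those references treat precisely the midpoint abscissa $x=\frac{a+b}{2}$ and use the same normalization of $\sigma(f')$, which is the main point on which the sharpness claim rests.
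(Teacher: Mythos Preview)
Your proposal is correct and matches the paper's approach exactly: the corollary is obtained by direct substitution of $x=\frac{a+b}{2}$ into \eqref{2.31}, and the sharpness is precisely the content already cited in the proof of Theorem~\ref{Th2.3} from \cite[Theorem 3 and Remark 2]{l2007} and \cite[Propositions 2.3 and 2.6]{d2001}. The paper does not even write out a separate proof for this corollary, so your computation is, if anything, more detailed than the original.
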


\begin{remark} \label{re4}
We note that \eqref{2.35} and \eqref{2.36} are also given  in \cite[Propositions 2.3 and 2.6]{d2001} and \cite[Remark 2]{l2007}. However, it is obvious that  \eqref{2.34} gives a smaller estimator, and can neither be obtained from \cite[Theorem 3 and Remark 2]{u2003} for any special case, nor from  \cite[Propositions 2.3 and 2.6]{d2001}.
\end{remark}

Another inequality of Ostrowski's type may be stated as follows:
\begin{corollary}\label{co2.12}
Let $f$ be as in Theorem \ref{Th2.3}. Additionally, if $f$ is symmetric about the line $x=\frac{a+b}{2}$,
i.e., $f (a + b - x) = f (x)$,
 then for all $x\in[a,\frac{a+b}{2}]$ we have
\begin{equation}
\left|f(x)-\frac{1}{b-a}\int_{a}^{b}f(t)dt\right|\leq
 (b-a)^{-1/2} \left[\frac{(b-a)^2}{48}+\left(x-\frac{3a+b}{4}\right)^2\right]^{1/2}\sqrt{\sigma(f')}.\label{2.37}
\end{equation}
\end{corollary}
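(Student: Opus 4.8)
\textbf{Proof proposal for Corollary \ref{co2.12}.}
The plan is to reduce the symmetric case directly to Theorem \ref{Th2.3}, exactly in the same way that Corollaries \ref{co2.4} and \ref{co2.8} are obtained from Theorems \ref{Th2.1} and \ref{Th2.2}. First I would note that the hypotheses of Theorem \ref{Th2.3} are assumed to hold for $f$, so inequality \eqref{2.31} is available for all $x\in[a,\frac{a+b}{2}]$. The only extra ingredient is the symmetry assumption $f(a+b-x)=f(x)$, which I would use to simplify the left-hand side of \eqref{2.31}: under this hypothesis one has
\begin{align*}
\frac{f(x)+f(a+b-x)}{2}=\frac{f(x)+f(x)}{2}=f(x),
\end{align*}
so the quantity $\left|\frac{f(x)+f(a+b-x)}{2}-\frac{1}{b-a}\int_a^b f(t)\,dt\right|$ appearing on the left of \eqref{2.31} becomes precisely $\left|f(x)-\frac{1}{b-a}\int_a^b f(t)\,dt\right|$.

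Substituting this identification into \eqref{2.31} yields
\begin{align*}
\left|f(x)-\frac{1}{b-a}\int_a^b f(t)\,dt\right|\le (b-a)^{-1/2}\left[\frac{(b-a)^2}{48}+\left(x-\frac{3a+b}{4}\right)^2\right]^{1/2}\sqrt{\sigma(f')},
\end{align*}
which is exactly \eqref{2.37}. No new estimation is required; the right-hand side and the definition of $\sigma(f')$ are inherited verbatim from Theorem \ref{Th2.3}, so the corollary follows immediately. The argument is purely formal and there is essentially no obstacle: the one point worth a remark is that the symmetry hypothesis is used only to rewrite the averaged value $\frac{f(x)+f(a+b-x)}{2}$ as $f(x)$, and it is consistent for every $x\in[a,\frac{a+b}{2}]$ since the reflection $x\mapsto a+b-x$ maps this interval into $[\frac{a+b}{2},b]$, so both endpoints $x$ and $a+b-x$ lie in $[a,b]$ where $f$ and $f'$ are defined and the symmetry relation is assumed. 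Hence the proof reduces to one line of substitution, and I would write it as such.
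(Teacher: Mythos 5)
Your proposal is correct and matches the paper's intended (one-line) argument: the paper states this corollary without proof precisely because, under the symmetry hypothesis, $\frac{f(x)+f(a+b-x)}{2}=f(x)$ and \eqref{2.37} is then just \eqref{2.31} restated. Nothing further is needed.
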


\begin{remark} \label{re5}
Under the assumptions of Corollary \ref{co2.12} with $x=a$, we have
\begin{align}
\left|f\left(a\right)
-\frac{1}{b-a}\int_{a}^{b}f(t)dt\right|  \leq \frac{(b-a)^{1/2}}{2\sqrt{3} }\sqrt{\sigma(f')}.\label{2.38}
\end{align}
\end{remark}

\section{Application to Composite Quadrature Rules}

Let $I_{n}: a=x_{0}<x_{1}<\cdot\cdot\cdot<x_{n-1}<x_{n}=b$ be a
partition of the interval $[a,b]$ and $h_{i}=x_{i+1}-x_{i}$
$(i=0,1,2,\cdot\cdot\cdot,n-1)$.
Consider the general quadrature formula
\begin{equation}
S(f,I_{n})=\frac{1}{2}\sum_{i=0}^{n-1}\left[f\left(\frac{3x_{i}+x_{i+1}}{4}\right)+f\left(\frac{x_{i}+3x_{i+1}}{4}\right)\right]h_{i}. \label{3.1}
\end{equation}

\begin{theorem}\label{Th3.1}
Let $f:[a,b]\rightarrow\mathbb{R}$ be a differentiable mapping in $(a,b)$. If $f'\in L^1[a,b]$ and $\gamma\le f'(x)\le \Gamma, \forall\ x\in [a,b]$, then we have
\begin{equation*}
\int_{a}^{b}f(x)dx=S(f,I_{n})+R(f,I_{n})
\end{equation*}
and the remainder $R(f,I_{n})$ satisfies the estimates
\begin{equation}
|R(f,I_{n})|\leq\frac{1}{4}\sum_{i=0}^{n-1}(S_i-\gamma)h_i^2 \label{3.2}
\end{equation}
and
\begin{equation}
|R(f,I_{n})|\leq\frac{1}{4}\sum_{i=0}^{n-1}(\Gamma-S_i)h_i^2, \label{3.3}
\end{equation}
where $S_i=(f(x_{i+1})-f(x_{i}))/h_i, i=0,1,2,\cdot\cdot\cdot,n-1.$
\end{theorem}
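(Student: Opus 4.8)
The plan is to apply Theorem \ref{Th2.1} on each subinterval $[x_i,x_{i+1}]$ and then sum the resulting local estimates. On the subinterval $[x_i,x_{i+1}]$ the function $f$ satisfies the hypotheses of Theorem \ref{Th2.1} (it is differentiable on $(x_i,x_{i+1})$, $f'\in L^1$ there, and $\gamma\le f'\le\Gamma$), so choosing the evaluation point to be the left Gauss-type node $x=\frac{3x_i+x_{i+1}}{4}$ — which is the point $x=\frac{3a+b}{4}$ for the interval $[x_i,x_{i+1}]$ — we are exactly in the situation of the trapezoid-type Corollary \eqref{2.13}. This gives
\begin{equation*}
\left|\frac{f\!\left(\frac{3x_i+x_{i+1}}{4}\right)+f\!\left(\frac{x_i+3x_{i+1}}{4}\right)}{2}-\frac{1}{h_i}\int_{x_i}^{x_{i+1}}f(t)\,dt\right|\le\frac{h_i}{4}\,(S_i-\gamma),
\end{equation*}
where $S_i=(f(x_{i+1})-f(x_i))/h_i$, and similarly $\frac{h_i}{4}(\Gamma-S_i)$ from \eqref{2.14}.

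Next I would multiply each local inequality through by $h_i>0$ to convert the averaged integral $\frac{1}{h_i}\int_{x_i}^{x_{i+1}}f$ into $\int_{x_i}^{x_{i+1}}f$, yielding
\begin{equation*}
\left|\frac{h_i}{2}\left[f\!\left(\tfrac{3x_i+x_{i+1}}{4}\right)+f\!\left(\tfrac{x_i+3x_{i+1}}{4}\right)\right]-\int_{x_i}^{x_{i+1}}f(t)\,dt\right|\le\frac{h_i^2}{4}\,(S_i-\gamma).
\end{equation*}
Summing over $i=0,1,\dots,n-1$, the left-hand integrals telescope into $\int_a^b f(t)\,dt$ and the finite sum of the evaluation terms is precisely $S(f,I_n)$ as defined in \eqref{3.1}, so the quantity inside the absolute value becomes $S(f,I_n)-\int_a^b f(t)\,dt$, i.e.\ $-R(f,I_n)$ where $R(f,I_n):=\int_a^b f(t)\,dt-S(f,I_n)$. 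Applying the triangle inequality to pass the sum inside the absolute value then gives $|R(f,I_n)|\le\frac14\sum_{i=0}^{n-1}(S_i-\gamma)h_i^2$, which is \eqref{3.2}; the bound \eqref{3.3} follows identically from \eqref{2.14}.

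There is no real obstacle here — the argument is the standard "localize, scale, sum, triangle inequality" template for turning a single-interval quadrature estimate into a composite one. The only points requiring a word of care are: (i) confirming that $\frac{3x_i+x_{i+1}}{4}$ and $\frac{x_i+3x_{i+1}}{4}$ are the images of $\frac{3a+b}{4}$ and $\frac{a+3b}{4}$ under the affine map taking $[a,b]$ to $[x_i,x_{i+1}]$, so that Corollary \eqref{2.13}–\eqref{2.14} applies verbatim; and (ii) the bookkeeping of the factor $h_i$ versus $h_i^2$ when clearing the $\frac{1}{h_i}$ from the averaged integral. Both are routine.
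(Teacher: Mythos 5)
Your proposal is correct and is essentially the paper's own proof: the paper likewise applies the trapezoid-type estimate \eqref{2.13} (resp. \eqref{2.14}) on each subinterval $[x_{i},x_{i+1}]$, which after clearing the factor $\frac{1}{h_i}$ gives the local bound $\frac{h_i^2}{4}(S_i-\gamma)$, and then sums over $i$ using the triangle inequality. Your additional remarks on the affine rescaling of the nodes and the $h_i$ versus $h_i^2$ bookkeeping are just a more explicit writing of the same argument.
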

\begin{proof}
Applying \eqref{2.13} to the interval $[x_{i},x_{i+1}]$,
then we get
\begin{equation*}
\left|\frac{1}{2}\left[f\left(\frac{3x_{i}+x_{i+1}}{4}\right)+f\left(\frac{x_{i}+3x_{i+1}}{4}\right)\right]h_{i}
- \int_{x_{i}}^{x_{i+1}}f(t)dt\right|\leq \frac{h_i^2}{4}(S_i-\gamma)
\end{equation*}
for $i=0,1,2,\cdot\cdot\cdot,n-1.$
Now summing over $i$ from $0$ to $n-1$  and using the triangle
inequality,  we get \eqref{3.2}. In a similar way, we get  \eqref{3.3}.
\end{proof}

\begin{remark} \label{re7}
It is obvious that the estimations obtained in Theorem \ref{Th3.1} are better than those of  \cite[Theorem 7]{u2003} due to a smaller error.
\end{remark}

\begin{theorem}\label{Th3.2}
Let $h_{i}=x_{i+1}-x_{i}=h=\frac{b-a}{n}$
$(i=0,1,2,\cdot\cdot\cdot,n-1)$ and let  $f:[a,b]\rightarrow\mathbb{R}$ be a twice continuously differentiable
mapping  in $(a,b)$ with $f''\in L^2[a,b]$. Then we have
\begin{equation*}
\int_{a}^{b}f(x)dx=S(f,I_{n})+R(f,I_{n})
\end{equation*}
and the remainder $R(f,I_{n})$ satisfies the estimate
\begin{equation}
|R(f,I_{n})|\leq \frac{(b-a)^{5/2}}{4\sqrt{3}\pi n^2}\|f''\|_2.\label{3.4}
\end{equation}
\end{theorem}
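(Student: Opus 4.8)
The plan is to apply the single-interval estimate \eqref{2.26} on each subinterval $[x_i,x_{i+1}]$ of the uniform partition and then sum, exactly as in the proof of Theorem \ref{Th3.1}. Since $h_i=h=\frac{b-a}{n}$ for every $i$, applying \eqref{2.26} with $a$ replaced by $x_i$ and $b$ replaced by $x_{i+1}$ gives
\begin{equation*}
\left|\frac{1}{2}\left[f\left(\frac{3x_i+x_{i+1}}{4}\right)+f\left(\frac{x_i+3x_{i+1}}{4}\right)\right]h-\int_{x_i}^{x_{i+1}}f(t)\,dt\right|\leq\frac{h^{3/2}}{4\sqrt{3}\,\pi}\left(\int_{x_i}^{x_{i+1}}(f''(t))^2\,dt\right)^{1/2}
\end{equation*}
for $i=0,1,\dots,n-1$, where I have written the local $L^2$ norm of $f''$ as an explicit integral over $[x_i,x_{i+1}]$ so that the pieces can be recombined.

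Next I would sum over $i$ and use the triangle inequality to bound $|R(f,I_n)|$ by $\frac{h^{3/2}}{4\sqrt{3}\,\pi}\sum_{i=0}^{n-1}\left(\int_{x_i}^{x_{i+1}}(f''(t))^2\,dt\right)^{1/2}$. The one genuinely non-routine step is to pass from this sum of square roots to a single factor $\|f''\|_2$; for that I would invoke the Cauchy--Schwarz inequality for finite sums, $\sum_{i=0}^{n-1}c_i\le n^{1/2}\left(\sum_{i=0}^{n-1}c_i^2\right)^{1/2}$ with $c_i=\left(\int_{x_i}^{x_{i+1}}(f''(t))^2\,dt\right)^{1/2}$, so that $\sum_{i=0}^{n-1}c_i\le n^{1/2}\left(\sum_{i=0}^{n-1}\int_{x_i}^{x_{i+1}}(f''(t))^2\,dt\right)^{1/2}=n^{1/2}\|f''\|_2$, the last equality because the subintervals partition $[a,b]$.

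Combining the two displays gives $|R(f,I_n)|\le\frac{h^{3/2}}{4\sqrt{3}\,\pi}\,n^{1/2}\|f''\|_2$, and substituting $h=\frac{b-a}{n}$ yields $h^{3/2}n^{1/2}=\frac{(b-a)^{3/2}}{n^{3/2}}\cdot n^{1/2}=\frac{(b-a)^{3/2}}{n}$. This does not immediately match the claimed bound $\frac{(b-a)^{5/2}}{4\sqrt{3}\,\pi n^2}$, so I expect the main obstacle to be a bookkeeping discrepancy in powers of $(b-a)$ and $n$: most likely \eqref{2.26} should be read with its $\|f''\|_2$ denoting the norm over the \emph{subinterval} (of length $h$, not $b-a$), in which case the local estimate already carries an extra $h^{1/2}$ absorbed into $\|f''\|_{2,[x_i,x_{i+1}]}$, and re-expressing everything in terms of the global norm produces the stated powers. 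I would therefore recheck the exact normalization in \eqref{2.23}--\eqref{2.26}, carry the constant through carefully, and only then assemble the final inequality \eqref{3.4}.
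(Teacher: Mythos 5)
Your strategy is the paper's own: apply \eqref{2.26} on each $[x_i,x_{i+1}]$, sum, and use the Cauchy--Schwarz inequality for finite sums. The gap is in your very first display: \eqref{2.26} bounds the deviation of $\frac12\left[f\left(\frac{3a+b}{4}\right)+f\left(\frac{a+3b}{4}\right)\right]$ from the \emph{mean value} $\frac{1}{b-a}\int_a^b f(t)\,dt$, not from the integral. On $[x_i,x_{i+1}]$ it reads
\begin{equation*}
\left|\frac{f\left(\frac{3x_i+x_{i+1}}{4}\right)+f\left(\frac{x_i+3x_{i+1}}{4}\right)}{2}-\frac1h\int_{x_i}^{x_{i+1}}f(t)\,dt\right|\le \frac{h^{3/2}}{4\sqrt3\,\pi}\left[\int_{x_i}^{x_{i+1}}(f''(t))^2\,dt\right]^{1/2},
\end{equation*}
so after multiplying through by $h$ to compare with $\int_{x_i}^{x_{i+1}}f(t)\,dt$ the right-hand side is $\frac{h^{5/2}}{4\sqrt3\,\pi}\left[\int_{x_i}^{x_{i+1}}(f''(t))^2\,dt\right]^{1/2}$, not $\frac{h^{3/2}}{4\sqrt3\,\pi}\left[\int_{x_i}^{x_{i+1}}(f''(t))^2\,dt\right]^{1/2}$ as you wrote. (Your version is dimensionally inconsistent and false for large $h$; test $f(t)=t^2$ on $[0,L]$ with $L$ large.) With the exponent $5/2$ your remaining steps are exactly the paper's and close the proof: summing, using the triangle inequality and $\sum_i c_i\le \sqrt n\left(\sum_i c_i^2\right)^{1/2}$ gives $|R(f,I_n)|\le \frac{h^{5/2}}{4\sqrt3\,\pi}\sqrt n\,\|f''\|_2=\frac{(b-a)^{5/2}}{4\sqrt3\,\pi n^2}\|f''\|_2$, which is \eqref{3.4}.

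Your closing diagnosis does not repair the discrepancy: you already (correctly) read $\|f''\|_2$ in the local estimate as the norm over $[x_i,x_{i+1}]$, so reinterpreting the norm changes nothing, and there is no rigorous ``extra $h^{1/2}$'' hidden in the subinterval norm --- it is merely bounded above by the global norm. The missing factor is exactly the single power of $h$ produced by the $\frac{1}{b-a}$ normalization on the left-hand side of \eqref{2.26}; once it is restored, your own bookkeeping $h^{5/2}n^{1/2}=(b-a)^{5/2}/n^2$ gives the stated powers with no further adjustment.
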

\begin{proof}
Applying \eqref{2.26} to the interval $[x_{i},x_{i+1}]$,
then we get
\begin{equation*}
\left|\frac{h}{2}\left[f\left(\frac{3x_{i}+x_{i+1}}{4}\right)+f\left(\frac{x_{i}+3x_{i+1}}{4}\right)\right]
- \int_{x_{i}}^{x_{i+1}}f(t)dt\right|\leq \frac{h^{5/2}}{4\sqrt{3}\pi}\left[\int_{x_i}^{x_{i+1}}(f''(t))^2 dt\right]^{1/2}
\end{equation*}
for $i=0,1,2,\cdot\cdot\cdot,n-1.$
Now summing over $i$ from $0$ to $n-1$,  and using the triangle
inequality and the Cauchy inequality,  we get
\begin{align*}
&\left|\frac{h}{2}\sum_{i=0}^{n-1}\left[f\left(\frac{3x_{i}+x_{i+1}}{4}\right)+f\left(\frac{x_{i}+3x_{i+1}}{4}\right)\right]
- \int_{a}^{b}f(t)dt\right|\\
\leq &\frac{h^{5/2}}{4\sqrt{3}\pi}\sum_{i=0}^{n-1}\left[\int_{x_i}^{x_{i+1}}(f''(t))^2 dt\right]^{1/2}
\leq  \frac{h^{5/2}}{4\sqrt{3}\pi}\sqrt{n}\left[\sum_{i=0}^{n-1}\int_{x_i}^{x_{i+1}}(f''(t))^2 dt\right]^{1/2}
=  \frac{(b-a)^{5/2}}{4\sqrt{3}\pi n^2}\|f''\|_2.
\end{align*}
Therefore,  \eqref{3.4} is obtained.
\end{proof}

\begin{theorem}\label{Th3.3}
Let $h_{i}=x_{i+1}-x_{i}=h=\frac{b-a}{n}$
$(i=0,1,2,\cdot\cdot\cdot,n-1)$ and let $f:[a,b]\rightarrow\mathbb{R}$ be an absolutely continuous mapping in $(a,b)$ with $f'\in L^2[a,b]$.  Then we have
\begin{align*}
\int_{a}^{b}f(x)dx=S(f,I_{n})+R(f,I_{n})
\end{align*}
and the remainder $R(f,I_{n})$ satisfies the estimate
\begin{align}
|R(f,I_{n})|\leq \frac{(b-a)^{3/2}}{4\sqrt{3}n} \sqrt{\sigma(f')}.\label{3.5}
\end{align}
\end{theorem}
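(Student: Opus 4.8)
The plan is to mirror the proof of Theorem~\ref{Th3.2}: apply the sharp trapezoid-type estimate \eqref{2.34} on each subinterval $[x_i,x_{i+1}]$ and then assemble the pieces with the triangle inequality and the Cauchy--Schwarz inequality. Since $f$ is absolutely continuous on $[a,b]$ with $f'\in L^2[a,b]$, the same is true on every $[x_i,x_{i+1}]$, so \eqref{2.34} applies there. An interval of length $h$ gives
\begin{equation*}
\left|\frac{h}{2}\left[f\left(\frac{3x_i+x_{i+1}}{4}\right)+f\left(\frac{x_i+3x_{i+1}}{4}\right)\right]-\int_{x_i}^{x_{i+1}}f(t)\,dt\right|\le \frac{h^{3/2}}{4\sqrt{3}}\sqrt{\sigma_i(f')},
\end{equation*}
where $\sigma_i(f')=\int_{x_i}^{x_{i+1}}(f'(t))^2\,dt-\dfrac{(f(x_{i+1})-f(x_i))^2}{h}$. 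Summing over $i=0,\dots,n-1$ and using the triangle inequality gives $|R(f,I_n)|\le \dfrac{h^{3/2}}{4\sqrt{3}}\sum_{i=0}^{n-1}\sqrt{\sigma_i(f')}$, and one more application of Cauchy--Schwarz yields $\sum_{i=0}^{n-1}\sqrt{\sigma_i(f')}\le \sqrt{n}\left[\sum_{i=0}^{n-1}\sigma_i(f')\right]^{1/2}$.

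The one step that is not a mechanical transcription is passing from $\sum_{i=0}^{n-1}\sigma_i(f')$ to $\sigma(f')$. Writing
\begin{equation*}
\sum_{i=0}^{n-1}\sigma_i(f')=\|f'\|_2^2-\frac{1}{h}\sum_{i=0}^{n-1}(f(x_{i+1})-f(x_i))^2,
\end{equation*}
I would invoke the discrete Cauchy--Schwarz (power-mean) inequality $\sum_{i=0}^{n-1}(f(x_{i+1})-f(x_i))^2\ge \frac{1}{n}\left(\sum_{i=0}^{n-1}(f(x_{i+1})-f(x_i))\right)^2=\frac{(f(b)-f(a))^2}{n}$, the last equality being the telescoping of the increments. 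Since $1/(nh)=1/(b-a)$, this forces $\sum_{i=0}^{n-1}\sigma_i(f')\le \|f'\|_2^2-\frac{(f(b)-f(a))^2}{b-a}=\sigma(f')$.

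Combining everything, $|R(f,I_n)|\le \dfrac{h^{3/2}}{4\sqrt{3}}\sqrt{n}\,\sqrt{\sigma(f')}$, and substituting $h=(b-a)/n$ turns $h^{3/2}\sqrt{n}$ into $(b-a)^{3/2}/n$, which is precisely \eqref{3.5}. The main (mild) obstacle is recognizing that the telescoping of the $f(x_{i+1})-f(x_i)$ produces the inequality $\sum_i\sigma_i(f')\le\sigma(f')$ rather than an identity; once that is in place the rest is a direct imitation of Theorem~\ref{Th3.2} and the single-interval bound \eqref{2.34}.
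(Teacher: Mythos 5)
Your proposal is correct and follows essentially the same route as the paper: apply \eqref{2.34} on each subinterval, sum with the triangle inequality, use Cauchy--Schwarz to bound $\sum_i\sqrt{\sigma_i(f')}$ by $\sqrt{n}\bigl[\sum_i\sigma_i(f')\bigr]^{1/2}$, and then use the discrete Cauchy--Schwarz (telescoping) inequality to pass from $\sum_i\sigma_i(f')$ to $\sigma(f')$. The paper's "Cauchy inequality twice" is exactly your two steps, so there is nothing to add.
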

\begin{proof}
Applying \eqref{2.34} to the interval $[x_{i},x_{i+1}]$,
then we get
\begin{align*}
&\left|\frac{h}{2}\left[f\left(\frac{3x_{i}+x_{i+1}}{4}\right)+f\left(\frac{x_{i}+3x_{i+1}}{4}\right)\right]
- \int_{x_{i}}^{x_{i+1}}f(t)dt\right|\\
\leq &\frac{h^{3/2}}{4\sqrt{3}}\left[\int_{x_i}^{x_{i+1}}(f'(t))^2 dt-\frac{(f(x_{i+1})-f(x_{i}))^2}{h}\right]^{1/2}
\end{align*}
for $i=0,1,2,\cdot\cdot\cdot,n-1.$
Now summing over $i$ from $0$ to $n-1$, using the triangle
inequality  and using the Cauchy inequality twice,  we get
\begin{align*}
&\left|\frac{h}{2}\sum_{i=0}^{n-1}\left[f\left(\frac{3x_{i}+x_{i+1}}{4}\right)+f\left(\frac{x_{i}+3x_{i+1}}{4}\right)\right]
- \int_{a}^{b}f(t)dt\right|\\
\leq &\frac{h^{3/2}}{4\sqrt{3}}\sum_{i=0}^{n-1}\left[\int_{x_i}^{x_{i+1}}(f'(t))^2 dt-\frac{(f(x_{i+1})-f(x_{i}))^2}{h}\right]^{1/2}\\
\leq & \frac{h^{3/2}}{4\sqrt{3}} \sqrt{n}\left[\|f'\|_2^2-\frac{n}{b-a}\sum_{i=0}^{n-1}(f(x_{i+1})-f(x_{i}))^2\right]^{1/2}\\
\leq & \frac{h^{3/2}}{4\sqrt{3}} \sqrt{n}\left[\|f'\|_2^2-\frac{(f(b)-f(a))^2}{b-a} \right]^{1/2}\\
=& \frac{(b-a)^{3/2}}{4\sqrt{3}n} \sqrt{\sigma(f')}.
\end{align*}
Therefore,  \eqref{3.5} is obtained.
\end{proof}

\section{Application to probability density functions}

Now, let $X$ be a random variable taking values in the finite interval
$[a,b]$, with the probability density function $f : [a, b]\rightarrow [0, 1]$ and with
the cumulative distribution function $$F (x) = Pr (X \leq  x) = \int_a^x f (t) dt.$$

The following results hold:
\begin{theorem}\label{Th4.1}
With the assumptions of Theorem \ref{Th2.1}, we have
\begin{equation}
\left|\frac{1}{2}[F(x)+F(a+b-x)]-\frac{b-E(X)}{b-a}\right|\leq
\left[\frac{b-a}{4}+\left|x-\frac{3a+b}{4}\right|\right]  \left(\frac{1}{b-a}-\gamma\right)\label{4.1}
\end{equation}
and
\begin{equation}
\left|\frac{1}{2}[F(x)+F(a+b-x)]-\frac{b-E(X)}{b-a}\right|\leq
\left[\frac{b-a}{4}+\left|x-\frac{3a+b}{4}\right|\right]  \left(\Gamma-\frac{1}{b-a}\right),\label{4.2}
\end{equation}
for all $x\in[a,\frac{a+b}{2}]$, where $E (X)$ is the expectation of $X$.
\end{theorem}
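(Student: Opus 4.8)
The plan is to apply Theorem \ref{Th2.1} directly, with the cumulative distribution function $F$ playing the role of $f$. First I would check that $F$ meets the hypotheses of Theorem \ref{Th2.1}: since $f$ is the probability density function of $X$, we have $F'=f$ on $(a,b)$, $F'=f\in L^1[a,b]$ with $\int_a^b f(t)\,dt=1$, and the assumption $\gamma\le f(x)\le\Gamma$ is precisely $\gamma\le F'(x)\le\Gamma$. Hence Theorem \ref{Th2.1} is applicable to $F$, and \eqref{2.1}--\eqref{2.2} hold with $f$ replaced by $F$.

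Next I would evaluate the quantity $S$ of Theorem \ref{Th2.1} in this setting: with $F$ in the role of $f$ it becomes $S=(F(b)-F(a))/(b-a)=(1-0)/(b-a)=1/(b-a)$, using $F(a)=0$ and $F(b)=1$. This already produces the factors $\frac{1}{b-a}-\gamma$ and $\Gamma-\frac{1}{b-a}$ appearing on the right-hand sides of \eqref{4.1} and \eqref{4.2}.

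The only genuine computation is to rewrite the averaged integral $\frac{1}{b-a}\int_a^b F(t)\,dt$ in terms of $E(X)$. Integrating by parts and using $F(a)=0$, $F(b)=1$ and $E(X)=\int_a^b t f(t)\,dt$, one gets $\int_a^b F(t)\,dt = bF(b)-aF(a)-\int_a^b t f(t)\,dt = b-E(X)$, so that $\frac{1}{b-a}\int_a^b F(t)\,dt = \frac{b-E(X)}{b-a}$. Substituting this identity and $S=1/(b-a)$ into \eqref{2.1} and \eqref{2.2} applied to $F$ yields \eqref{4.1} and \eqref{4.2} respectively, valid for all $x\in[a,\frac{a+b}{2}]$.

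I do not anticipate any serious obstacle here. The argument is essentially a substitution; the two points deserving a moment of care are the integration-by-parts formula for $E(X)$ (standard, but one should note that it is exactly what converts the middle term into $\frac{b-E(X)}{b-a}$) and the observation that the regularity assumptions of Theorem \ref{Th2.1} are inherited by $F$ once we know $f$ is a density bounded between $\gamma$ and $\Gamma$.
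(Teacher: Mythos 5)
Your proposal is correct and follows the same route as the paper: apply \eqref{2.1}--\eqref{2.2} with $f$ replaced by $F$, note that $S=(F(b)-F(a))/(b-a)=1/(b-a)$, and use the identity $E(X)=b-\int_a^b F(t)\,dt$ to rewrite the integral term. Your write-up simply makes explicit the verification of hypotheses and the integration by parts that the paper leaves implicit.
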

\begin{proof}  By \eqref{2.1} and \eqref{2.2} on choosing $f = F$ and taking into account
$$E(X)=\int_a^b t dF(t)=b-\int_a^b F(t)dt,$$
we obtain \eqref{4.1} and \eqref{4.2}.
\end{proof}

\begin{corollary}
Under the  assumptions of Theorem \ref{Th4.1} with $x=\frac{3a+b}{4}$, we have
\begin{equation*}
\left|\frac{1}{2}\left[F\left(\frac{3a+b}{4}\right)+F\left(\frac{a+3b}{4}\right)\right]-\frac{b-E(X)}{b-a}\right|\leq
 \frac{b-a}{4}  \left(\frac{1}{b-a}-\gamma\right)
\end{equation*}
and
\begin{equation*}
\left|\frac{1}{2}\left[F\left(\frac{3a+b}{4}\right)+F\left(\frac{a+3b}{4}\right)\right]-\frac{b-E(X)}{b-a}\right|\leq
 \frac{b-a}{4}  \left(\Gamma-\frac{1}{b-a}\right).
\end{equation*}
\end{corollary}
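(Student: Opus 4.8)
The plan is to obtain the corollary as the evaluation of Theorem~\ref{Th4.1} at the single point $x=\frac{3a+b}{4}$, which lies in $\left[a,\frac{a+b}{2}\right]$ whenever $a<b$, so that the hypotheses of Theorem~\ref{Th4.1} are in force. First I would record the elementary identity $a+b-x=a+b-\frac{3a+b}{4}=\frac{a+3b}{4}$, so that the pair $\{x,a+b-x\}$ becomes $\left\{\frac{3a+b}{4},\frac{a+3b}{4}\right\}$ and the arguments of $F$ match those in the claimed inequalities. Second, I would note that at this choice of $x$ the modulus term satisfies $\left|x-\frac{3a+b}{4}\right|=0$, so the bracketed factor $\frac{b-a}{4}+\left|x-\frac{3a+b}{4}\right|$ appearing on the right-hand sides of \eqref{4.1} and \eqref{4.2} collapses to $\frac{b-a}{4}$.

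Substituting $x=\frac{3a+b}{4}$ into \eqref{4.1} and \eqref{4.2} and using these two observations then delivers the two displayed inequalities verbatim, with right-hand sides $\frac{b-a}{4}\left(\frac{1}{b-a}-\gamma\right)$ and $\frac{b-a}{4}\left(\Gamma-\frac{1}{b-a}\right)$; no additional estimation is required.

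As an alternative route that makes the dependence on Section~2 explicit, I could instead apply the trapezoid-type inequalities \eqref{2.13} and \eqref{2.14} directly to $f=F$. One uses that $F$ is absolutely continuous with $F'=f$ a.e.\ and $\gamma\le f(t)\le\Gamma$, together with the boundary values $F(a)=0$ and $F(b)=1$, so that $S=\frac{F(b)-F(a)}{b-a}=\frac{1}{b-a}$; combining this with $E(X)=\int_a^b t\,dF(t)=b-\int_a^b F(t)\,dt$ (integration by parts, exactly as in the proof of Theorem~\ref{Th4.1}) rewrites the left-hand sides of \eqref{2.13}--\eqref{2.14} in the stated form, and replacing $S$ by $\frac{1}{b-a}$ in $S-\gamma$ and $\Gamma-S$ gives the two bounds.

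I do not expect any genuine obstacle: the corollary is a direct specialization, and the only things to verify are the arithmetic identity $a+b-\frac{3a+b}{4}=\frac{a+3b}{4}$ and the vanishing of the modulus term. The one point worth a line is checking that $\frac{3a+b}{4}\in\left[a,\frac{a+b}{2}\right]$, which is immediate.
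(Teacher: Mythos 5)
Your proposal is correct and matches the paper's (implicit) argument: the corollary is obtained by setting $x=\frac{3a+b}{4}$ in \eqref{4.1} and \eqref{4.2}, so that $a+b-x=\frac{a+3b}{4}$ and the term $\left|x-\frac{3a+b}{4}\right|$ vanishes, reducing the bracket to $\frac{b-a}{4}$. Your alternative route through \eqref{2.13}--\eqref{2.14} with $f=F$, $S=\frac{1}{b-a}$ and $E(X)=b-\int_a^b F(t)\,dt$ is just the same specialization carried out one level earlier, so nothing further is needed.
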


\begin{theorem}\label{Th4.2}
With the assumptions of Theorem \ref{Th2.2}, we have
\begin{equation}
\left|\frac{1}{2}[F(x)+F(a+b-x)]-\frac{b-E(X)}{b-a}\right|\leq
\frac{(b-a)^{1/2}}{\pi}\left[\frac{(b-a)^2}{48}+\left(x-\frac{3a+b}{4}\right)^2\right]^{1/2}\|f'\|_2 \label{4.3}
\end{equation}
for all $x\in[a,\frac{a+b}{2}]$.
\end{theorem}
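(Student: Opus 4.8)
The plan is to follow the proof of Theorem \ref{Th4.1} essentially verbatim, only replacing the appeal to \eqref{2.1}--\eqref{2.2} by an appeal to \eqref{2.23}. First I would note that, since $f$ is a probability density function on $[a,b]$, the cumulative distribution function $F(x)=\int_a^x f(t)\,dt$ is an antiderivative of $f$, so that $F'=f$ and $F''=f'$. Hence, under the hypotheses of Theorem \ref{Th2.2} as invoked here (that $f$ is continuously differentiable on $(a,b)$ with $f'\in L^2[a,b]$), the function $F$ is twice continuously differentiable on $(a,b)$ with $F''\in L^2[a,b]$, which is exactly the input required by Theorem \ref{Th2.2}. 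In particular $\|F''\|_2=\|f'\|_2$.

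Next I would invoke the standard identity $E(X)=\int_a^b t\,dF(t)=b-\int_a^b F(t)\,dt$, obtained by integration by parts using $F(a)=0$ and $F(b)=1$; this rearranges to $\frac{1}{b-a}\int_a^b F(t)\,dt=\frac{b-E(X)}{b-a}$.

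The conclusion is then immediate: applying inequality \eqref{2.23} with $f$ replaced by $F$, the left-hand side $\left|\frac{F(x)+F(a+b-x)}{2}-\frac{1}{b-a}\int_a^b F(t)\,dt\right|$ becomes $\left|\frac{1}{2}[F(x)+F(a+b-x)]-\frac{b-E(X)}{b-a}\right|$ by the identity above, while the right-hand side becomes $\frac{(b-a)^{1/2}}{\pi}\big[\frac{(b-a)^2}{48}+(x-\frac{3a+b}{4})^2\big]^{1/2}\|f'\|_2$ because $\|F''\|_2=\|f'\|_2$; this is precisely \eqref{4.3}, valid for all $x\in[a,\frac{a+b}{2}]$.

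There is essentially no obstacle here: the argument is a one-line substitution into Theorem \ref{Th2.2} combined with the integration-by-parts identity for $E(X)$. The only point deserving a moment's attention is the transfer of smoothness, i.e.\ checking that "$f\in C^1$ with $f'\in L^2$" is, for $F$, the same as "$F\in C^2$ with $F''\in L^2$", which is immediate from $F'=f$. If one wished to weaken the density's regularity one would instead need a version of Theorem \ref{Th2.2} assuming only $f'=F''\in L^2$ with $F'$ merely absolutely continuous, but that refinement is not required for the statement as given.
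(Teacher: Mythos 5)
Your proof is correct and follows exactly the paper's own argument: apply \eqref{2.23} with $f$ replaced by $F$, use $E(X)=b-\int_a^b F(t)\,dt$, and note $F''=f'$ so $\|F''\|_2=\|f'\|_2$. The extra remark on transferring the smoothness hypotheses from $f$ to $F$ is a harmless (and welcome) elaboration of what the paper leaves implicit.
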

\begin{proof}  By \eqref{2.23} on choosing $f = F$ and taking into account
$$E(X)=\int_a^b t dF(t)=b-\int_a^b F(t)dt,$$
we obtain \eqref{4.3}.
\end{proof}

\begin{corollary}
Under the  assumptions of Theorem \ref{Th4.2} with $x=\frac{3a+b}{4}$, we have
\begin{equation*}
\left|\frac{1}{2}\left[F\left(\frac{3a+b}{4}\right)+F\left(\frac{a+3b}{4}\right)\right]-\frac{b-E(X)}{b-a}\right|\leq
\frac{(b-a)^{3/2}}{4\sqrt{3}\pi}\|f'\|_2.
\end{equation*}
\end{corollary}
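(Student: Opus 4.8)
The plan is to specialize Theorem~\ref{Th4.2} to the single point $x=\frac{3a+b}{4}$, which lies in $\left[a,\frac{a+b}{2}\right]$, and then simplify the resulting constant. First I would record the elementary identity $a+b-\frac{3a+b}{4}=\frac{a+3b}{4}$, so that the left-hand side of \eqref{4.3} becomes exactly $\left|\frac12\left[F\left(\frac{3a+b}{4}\right)+F\left(\frac{a+3b}{4}\right)\right]-\frac{b-E(X)}{b-a}\right|$, which is the quantity appearing in the corollary. This is the same reduction already used to pass from Theorem~\ref{Th2.2} to \eqref{2.26}, now applied to the distribution function $F$ in place of $f$.

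Next I would evaluate the right-hand side of \eqref{4.3} at this point. Since $x-\frac{3a+b}{4}=0$, the bracketed expression collapses to $\frac{(b-a)^2}{48}$, whose square root is $\frac{b-a}{\sqrt{48}}=\frac{b-a}{4\sqrt3}$. Multiplying by the prefactor $\frac{(b-a)^{1/2}}{\pi}$ yields $\frac{(b-a)^{3/2}}{4\sqrt3\,\pi}\|f'\|_2$, which is precisely the claimed bound. No genuine obstacle arises; the only points needing a little care are the arithmetic simplification $\sqrt{48}=4\sqrt3$ and the observation that the $x$-dependent term in the estimate vanishes identically at $x=\frac{3a+b}{4}$, so that this choice is the one minimizing the right-hand side of Theorem~\ref{Th4.2}.
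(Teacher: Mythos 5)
Your proposal is correct and is exactly the route the paper intends: specialize inequality \eqref{4.3} of Theorem \ref{Th4.2} at $x=\frac{3a+b}{4}\in\left[a,\frac{a+b}{2}\right]$, use $a+b-x=\frac{a+3b}{4}$, and simplify $\frac{(b-a)^{1/2}}{\pi}\cdot\frac{b-a}{\sqrt{48}}=\frac{(b-a)^{3/2}}{4\sqrt{3}\,\pi}$. Nothing further is needed.
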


\begin{theorem}\label{Th4.3}
With the assumptions of Theorem \ref{Th2.3}, we have
\begin{equation}
\left|\frac{1}{2}[F(x)+F(a+b-x)]-\frac{b-E(X)}{b-a}\right|\leq
(b-a)^{-1/2} \left[\frac{(b-a)^2}{48}+\left(x-\frac{3a+b}{4}\right)^2\right]^{1/2}\sqrt{\sigma(f)},\label{4.4}
\end{equation}
for all $x\in[a,\frac{a+b}{2}]$, where $\sigma(f) =\|f\|_2^2-\frac{1}{b-a}.$
\end{theorem}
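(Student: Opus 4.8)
The plan is to deduce Theorem \ref{Th4.3} from Theorem \ref{Th2.3} exactly as Theorems \ref{Th4.1} and \ref{Th4.2} were deduced from their analogues, namely by applying the abstract inequality \eqref{2.31} to the cumulative distribution function $f=F$ in place of $f$. The two ingredients I would assemble first are: (i) $F$ is absolutely continuous on $[a,b]$ with $F'=f\in L^2[a,b]$ (this is exactly the hypothesis that the density lies in $L^2$, so Theorem \ref{Th2.3} applies to $F$), and (ii) the standard identity $E(X)=\int_a^b t\,dF(t)=b-\int_a^b F(t)\,dt$, obtained by integration by parts, which converts $\frac{1}{b-a}\int_a^b F(t)\,dt$ into $\frac{b-E(X)}{b-a}$.

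Next I would substitute $f\mapsto F$ into \eqref{2.31}. The left-hand side becomes $\bigl|\frac{F(x)+F(a+b-x)}{2}-\frac{1}{b-a}\int_a^b F(t)\,dt\bigr|$, which by (ii) equals $\bigl|\frac{1}{2}[F(x)+F(a+b-x)]-\frac{b-E(X)}{b-a}\bigr|$, the left-hand side of \eqref{4.4}. On the right-hand side, $\|f'\|_2$ in the statement of Theorem \ref{Th2.3} is now $\|F'\|_2=\|f\|_2$, and the quantity $S$ of Theorem \ref{Th2.1}, applied to $F$, is $S=\frac{F(b)-F(a)}{b-a}=\frac{1-0}{b-a}=\frac{1}{b-a}$, since $F(a)=0$ and $F(b)=1$. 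Hence
\begin{equation*}
\sigma(F)=\|F'\|_2^2-S^2(b-a)=\|f\|_2^2-\frac{1}{(b-a)^2}\cdot(b-a)=\|f\|_2^2-\frac{1}{b-a}=\sigma(f),
\end{equation*}
which is precisely the quantity $\sigma(f)$ defined in the statement of Theorem \ref{Th4.3}. Plugging these identifications into \eqref{2.31} yields \eqref{4.4} for all $x\in[a,\frac{a+b}{2}]$.

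There is essentially no obstacle here; the only point requiring a word of care is verifying that $F$ genuinely satisfies the hypotheses of Theorem \ref{Th2.3} — absolute continuity of $F$ and $F'\in L^2[a,b]$ — but both are immediate from the assumption that the density $f$ is in $L^2[a,b]$ (absolute continuity of $F$ holds for any density, and $F'=f$ a.e.). I would therefore write the proof in one or two sentences, mirroring the proofs of Theorems \ref{Th4.1} and \ref{Th4.2}: invoke \eqref{2.31} with $f=F$, use the expectation identity to rewrite the integral term, and observe that $S=\frac{1}{b-a}$ and hence $\sigma(F)=\sigma(f)$. One could also record the corollary $x=\frac{3a+b}{4}$, which collapses the bracket to $\frac{(b-a)^2}{48}$ and gives the trapezoid-type estimate $\frac{(b-a)^{1/2}}{4\sqrt{3}}\sqrt{\sigma(f)}$, paralleling the corollaries after Theorems \ref{Th4.1} and \ref{Th4.2}.
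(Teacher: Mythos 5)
Your proposal is correct and matches the paper's own argument: the paper likewise obtains \eqref{4.4} by applying \eqref{2.31} with $f=F$ and using $E(X)=\int_a^b t\,dF(t)=b-\int_a^b F(t)\,dt$. Your extra verification that $S=\frac{1}{b-a}$ (from $F(a)=0$, $F(b)=1$) and hence $\sigma(F)=\|f\|_2^2-\frac{1}{b-a}=\sigma(f)$ is a detail the paper leaves implicit, but it is the same route.
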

\begin{proof}  By \eqref{2.31} on choosing $f = F$ and taking into account
$$E(X)=\int_a^b t dF(t)=b-\int_a^b F(t)dt,$$
we obtain \eqref{4.4}.
\end{proof}

\begin{corollary}
Under the  assumptions of Theorem \ref{Th4.3} with $x=\frac{3a+b}{4}$, we have
\begin{equation*}
\left|\frac{1}{2}\left[F\left(\frac{3a+b}{4}\right)+F\left(\frac{a+3b}{4}\right)\right]-\frac{b-E(X)}{b-a}\right|\leq
\frac{(b-a)^{1/2}}{4\sqrt{3} }\sqrt{\sigma(f)}.
\end{equation*}
\end{corollary}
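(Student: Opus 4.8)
The plan is to obtain the stated bound by simply specializing Theorem~\ref{Th4.3} to the particular abscissa $x=\frac{3a+b}{4}$, exactly as the earlier trapezoid-type corollaries were deduced from their parent theorems. First I would substitute $x=\frac{3a+b}{4}$ into \eqref{4.4}. With this choice the term $\left(x-\frac{3a+b}{4}\right)^2$ vanishes, so the square-bracketed factor on the right-hand side of \eqref{4.4} reduces to $\left[\frac{(b-a)^2}{48}\right]^{1/2}=\frac{b-a}{4\sqrt{3}}$ (using $\sqrt{1/48}=1/(4\sqrt{3})$). At the same time $a+b-x=a+b-\frac{3a+b}{4}=\frac{a+3b}{4}$, so on the left-hand side $F(a+b-x)=F\!\left(\frac{a+3b}{4}\right)$, which already matches the form displayed in the statement.

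Combining these observations, the right-hand side of \eqref{4.4} becomes $(b-a)^{-1/2}\cdot\frac{b-a}{4\sqrt{3}}\cdot\sqrt{\sigma(f)}=\frac{(b-a)^{1/2}}{4\sqrt{3}}\sqrt{\sigma(f)}$, which is precisely the claimed estimate. The only care needed is the elementary simplification of the constant and the identity $a+b-\frac{3a+b}{4}=\frac{a+3b}{4}$; there is no substantive obstacle, since the inequality is an immediate corollary of \eqref{4.4}.

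As a cross-check I would also note that the same inequality follows directly from the trapezoid-type estimate \eqref{2.34} of Theorem~\ref{Th2.3} applied to $f=F$. Indeed $F$ is absolutely continuous with $F'=f\in L^2[a,b]$ (as $f$ takes values in $[0,1]$, hence is bounded), so $\|F'\|_2=\|f\|_2$ and $S=\frac{F(b)-F(a)}{b-a}=\frac{1}{b-a}$, whence $\sigma(F')=\|f\|_2^2-\frac{1}{b-a}=\sigma(f)$; and integration by parts gives $E(X)=\int_a^b t\,dF(t)=b-\int_a^b F(t)\,dt$, so that $\frac{1}{b-a}\int_a^b F(t)\,dt=\frac{b-E(X)}{b-a}$. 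Substituting these identities into \eqref{2.34} yields the same bound. The only mildly non-routine ingredient in this alternative route is that integration-by-parts identity, which has already been invoked in the proof of Theorem~\ref{Th4.3}.
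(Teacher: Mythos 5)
Your proposal is correct and matches the paper's (implicit) argument: the corollary is just the specialization $x=\frac{3a+b}{4}$ of \eqref{4.4}, with the quadratic term vanishing and $\left[\frac{(b-a)^2}{48}\right]^{1/2}(b-a)^{-1/2}=\frac{(b-a)^{1/2}}{4\sqrt{3}}$, exactly as you compute. The cross-check via \eqref{2.34} with $f=F$ is also sound and is essentially how Theorem \ref{Th4.3} itself was derived.
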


\subsection*{Acknowledgments}
This work was partly supported by the National Natural Science Foundation
of China (Grant No. 40975002) and the Natural Science Foundation of the Jiangsu
Higher Education Institutions (Grant No. 09KJB110005). The author would like to thank Professor J. Duoandikoetxea for his constructive comments on earlier versions of this paper.

\end{document}